\newcommand{\Sniady}{the last named author\xspace}
\newcommand{\biblio}{\printbibliography}
	\renewcommand{\todo}[2][]{\tikzexternaldisable\@todo[#1]{#2}\tikzexternalenable}
	\newcommand{\tikzexternaldisable}{}
	\newcommand{\tikzexternalenable}{}
\author[M.~Marciniak]{Mikołaj Marciniak} 
\address{Interdisciplinary Doctoral School “Academia Copernicana”, Faculty of
    Mathematics and Computer Science, Nicolaus Copernicus University in Toruń,
    ul.~Chopina 12/18, 87-100 Toruń, Poland} 
\email{marciniak@mat.umk.pl}
\author[{\L}.~Maślanka]{{\L}ukasz Ma\'slanka}
\address{Institute of Mathematics, Polish Academy of Sciences,
    \mbox{ul.~\'Sniadeckich 8}, 00-656 Warszawa, Poland}
\email{lmaslanka@impan.pl}
\author[P.~\'Sniady]{Piotr \'Sniady}
\address{Institute of Mathematics, Polish Academy of Sciences,
    \mbox{ul.~\'Sniadec\-kich 8,} \linebreak 00-656 Warszawa, Poland }
\email{psniady@impan.pl}
\theoremstyle{definition}
\newtheorem{definition}{Definition}[section]
\theoremstyle{plain}
\newtheorem{theorem}[definition]{Theorem}
\newtheorem{proposition}[definition]{Proposition}
\newtheorem{corollary}[definition]{Corollary}
\newtheorem{lemma}[definition]{Lemma}
\newtheorem{conjecture}[definition]{Conjecture}
\theoremstyle{remark}
\newtheorem{remark}[definition]{Remark}
\newtheorem{question}[definition]{Question}
\newtheorem{problem}[definition]{Problem}
\newcommand{\N}{\mathbb{N}}
\newcommand{\R}{\mathbb{R}}
\newcommand{\PP}{\mathbb{P}}
\newcommand{\PPcond}[2]{\PP\left(#1  \;\middle\vert\; #2 \right)}
\newcommand{\PPcondCurly}[2]{\PP\left\{ #1  \;\middle\vert\; #2 \right\}}
\newcommand{\E}{\mathbb{E}}
\newcommand{\Z}{\mathbb{Z}}
\newcommand{\ind}{\mathds{1}}
\newcommand{\Sym}[1]{\mathfrak{S}_#1}
\newcommand{\tableau}{\mathcal{T}}
\newcommand{\diagrams}{\mathbb{Y}}
\newcommand{\myPset}{\mathcal{P}^{(n)}}
\DeclareMathOperator{\RSK}{RSK}
\newcommand{\inrv}{\overset{\operatorname{rv}}{\in}}
\DeclareMathOperator{\Plancherel}{Plan}
\newcommand{\AD}{\cite[Theorem~5(b)]{Aldous1995}\xspace}
\begin{document}

\title[Poisson limit theorems for Robinson--Schensted correspondence]%
{Poisson limit theorems \\ for the Robinson--Schensted correspondence \\
and for the multi-line Hammersley process}

\begin{abstract}
We consider the Robinson--Schensted--Knuth algorithm applied to a random input and
study the growth of the bottom rows of the corresponding Young diagrams. We
prove a multidimensional Poisson limit theorem for the resulting Plancherel
growth process. In this way we extend the result of Aldous and Diaconis to more
than just one row. This result can be interpreted as convergence of the
multi-line Hammersley process to its stationary distribution which is given by
a collection of independent Poisson point processes.
\end{abstract}

\subjclass[2010]{%
60K35,  	%
05E10,	  	%
60F05, 		%
60C05   	%
}

\keywords{Robinson--Schensted--Knuth algorithm, RSK, Plancherel growth process,
    multi-line Hammersley process, Poisson process}

\maketitle

\section{Introduction}
\label{sec:preliminaries}

\subsection{Notations}

The set of Young diagrams will be denoted by $\diagrams$; the set of Young
diagrams with $n$ boxes will be denoted by $\diagrams_n$. The set $\diagrams$
has a~structure of an oriented graph, called \emph{the Young graph}; a pair
$\mu\nearrow\lambda$ forms an oriented edge in this graph if the Young diagram
$\lambda$ can be created from the Young diagram $\mu$ by addition of a single
box.

We will draw Young diagrams and tableaux in the French convention with the
Cartesian coordinate system $Oxy$, see~\cref{fig:RSKa}. 
We
index the rows and the columns of tableaux by \emph{non-negative} integers from
$\N_0=\{0,1,2,\dots\}$. In particular, if $\Box$ is a box of a tableau, we
identify it with the Cartesian coordinates of its \emph{lower-left corner}:
$\Box=(x,y)\in \N_0\times \N_0$. For a tableau $\tableau$ we denote by
$\tableau_{x,y}$ its entry which lies in the intersection of the row $y\in\N_0$
and the column $x\in\N_0$.

Also the rows of any Young diagram $\lambda=(\lambda_0,\lambda_1,\dots)$ are
indexed by the~elements of~$\N_0$; in particular the length of the bottom row
of $\lambda$ is denoted by~$\lambda_0$.

\medskip

Let  $X\colon \Omega \to V$ be a random variable with values in some set $V$. When we want to phrase this statement without mentioning the sample space $\Omega$
explicitly, we will write $X\inrv V$.

If $E$ is a random event, we denote by $\ind_E$ its indicator, which is the random variable given by
\[ \ind_E (\omega) = \begin{cases} 1 & \text{if $\omega\in E$}, \\
                                    0 & \text{otherwise.} 
                                \end{cases}
                                    \]

\subsection{Schensted row insertion}

\begin{figure}[t]
    \centering
    \hfill
    \subfloat[]{
        \begin{tikzpicture}[scale=0.75]
        \clip (-0.1,-0.5) rectangle (4.1,4.5);

        \draw (0,0) rectangle +(1,1); 
        \node at (0.5,0.5) {16}; 
        \draw (1,0) rectangle +(1,1); 
        \node at (1.5,0.5) {37}; 
        \draw (2,0) rectangle +(1,1); 
        \node at (2.5,0.5) {41}; 
        \draw (3,0) rectangle +(1,1); 
        \node at (3.5,0.5) {82}; 
        \draw (0,1) rectangle +(1,1); 
        \node at (0.5,1.5) {23}; 
        \draw (1,1) rectangle +(1,1); 
        \node at (1.5,1.5) {53}; 
        \draw (2,1) rectangle +(1,1); 
        \node at (2.5,1.5) {70}; 
        \draw (0,2) rectangle +(1,1); 
        \node at (0.5,2.5) {74}; 
        \draw (1,2) rectangle +(1,1); 
        \node at (1.5,2.5) {99};

        \end{tikzpicture}
        \label{fig:RSKa}
    }
    \hfill
    \subfloat[]{
        \begin{tikzpicture}[scale=0.75]
        \clip (-1.5,-0.5) rectangle (4.1,4.5);
        \fill[blue!10] (1,0) rectangle +(1,1);
        \fill[blue!10] (1,1) rectangle +(1,1);
        \fill[blue!10] (0,2) rectangle +(1,1);
        \fill[blue!10] (0,3) rectangle +(1,1);

        \draw (0,0) rectangle +(1,1); 
        \node at (0.5,0.5) {16}; 
        \draw (1,0) rectangle +(1,1); 
        \node at (1.5,0.5) {37}; 
        \draw (2,0) rectangle +(1,1); 
        \node at (2.5,0.5) {41}; 
        \draw (3,0) rectangle +(1,1); 
        \node at (3.5,0.5) {82}; 
        \draw (0,1) rectangle +(1,1); 
        \node at (0.5,1.5) {23}; 
        \draw (1,1) rectangle +(1,1); 
        \node at (1.5,1.5) {53}; 
        \draw (2,1) rectangle +(1,1); 
        \node at (2.5,1.5) {70}; 
        \draw (0,2) rectangle +(1,1); 
        \node at (0.5,2.5) {74}; 
        \draw (1,2) rectangle +(1,1); 
        \node at (1.5,2.5) {99};

        \draw[->] (-0.3,-0.45) to[bend left=60] (-0.3,0.45);
        \draw[->] (0,1) +(-0.3,-0.45) to[bend left=60] +(-0.3,0.45);
        \draw[->] (0,2) +(-0.3,-0.45) to[bend left=60] +(-0.3,0.45);
        \draw[->] (0,3) +(-0.3,-0.45) to[bend left=60] +(-0.3,0.45);
        
        \tiny
        \node[] at (-0.85,0) {18};
        \node[] at (-0.85,1) {37};
        \node[] at (-0.85,2) {53};
        \node[] at (-0.85,3) {74};
        
        \end{tikzpicture}
        \label{fig:RSKb}
    }
    \hfill
    \subfloat[]{
        \begin{tikzpicture}[scale=0.75]
        \clip (-1.5,-0.5) rectangle (4.1,4.5);
        \fill[blue!10] (1,0) rectangle +(1,1);
        \fill[blue!10] (1,1) rectangle +(1,1);
        \fill[blue!10] (0,2) rectangle +(1,1);
        \fill[blue!10] (0,3) rectangle +(1,1);
        
        \draw (0,0) rectangle +(1,1); 
        \node at (0.5,0.5) {16}; 
        \draw (1,0) rectangle +(1,1); 
        \node at (1.5,0.5) {18}; 
        \draw (2,0) rectangle +(1,1); 
        \node at (2.5,0.5) {41}; 
        \draw (3,0) rectangle +(1,1); 
        \node at (3.5,0.5) {82}; 
        \draw (0,1) rectangle +(1,1); 
        \node at (0.5,1.5) {23}; 
        \draw (1,1) rectangle +(1,1); 
        \node at (1.5,1.5) {37}; 
        \draw (2,1) rectangle +(1,1); 
        \node at (2.5,1.5) {70}; 
        \draw (0,2) rectangle +(1,1); 
        \node at (0.5,2.5) {53}; 
        \draw (1,2) rectangle +(1,1); 
        \node at (1.5,2.5) {99}; 
        \draw (0,3) rectangle +(1,1); 
        \node at (0.5,3.5) {74};

        \draw[->] (-0.3,-0.45) to[bend left=60] (-0.3,0.45);
        \draw[->] (0,1) +(-0.3,-0.45) to[bend left=60] +(-0.3,0.45);
        \draw[->] (0,2) +(-0.3,-0.45) to[bend left=60] +(-0.3,0.45);
        \draw[->] (0,3) +(-0.3,-0.45) to[bend left=60] +(-0.3,0.45);
        
        \tiny
        \node[] at (-0.85,0) {18};
        \node[] at (-0.85,1) {37};
        \node[] at (-0.85,2) {53};
        \node[] at (-0.85,3) {74};
        
        \end{tikzpicture}
        \label{fig:RSKc}
    }
    
    \caption{\protect\subref{fig:RSKa} The original tableau $\tableau$.
    \protect\subref{fig:RSKb} We consider the Schensted row insertion of the number
    $18$ to the~tableau~$\tableau$. The~highlighted boxes form the corresponding
    bumping route. The small numbers on the left (next
    to the arrows) indicate the~inserted/bumped numbers. \protect\subref{fig:RSKc} The
    output $\tableau\leftarrow 18$ of the Schensted insertion. } \label{fig:RSK}
\end{figure}

\newcommand{\letter}{a}

\emph{The Schensted row insertion} is an algorithm which takes as an input a
tableau $\tableau$ and some number $\letter$. The number $\letter$ is inserted
into the first row (that is,~the bottom row, the row with the index~$0$) of
$\tableau$ to the~leftmost box which contains an entry which is strictly bigger
than $\letter$.

In the case when the row contains no entries which
are bigger than~$\letter$, the~number~$\letter$ is inserted into the leftmost
empty box in this row and the algorithm terminates. 

If, however, the number $\letter$ is inserted into a box which was not empty,
the~previous content $\letter'$ of the box is \emph{bumped} into the second row
(that is, the~row with the index $1$). This means that the algorithm is
iterated but this time the number $\letter'$ is inserted into the second row to
the leftmost box which contains a number bigger than $\letter'$. 
We repeat these steps of row insertion and bumping 
until some number is inserted into a previously empty box.
This process is illustrated on \cref{fig:RSKb,fig:RSKc}. The outcome of
the Schensted insertion is defined as the~result of the~aforementioned procedure;
it will be denoted by $\tableau \leftarrow \letter$.

\subsection{Robinson--Schensted--Knuth algorithm}

For the purposes of this article we consider a simplified version of \emph{the
    Robinson--Schensted--Knuth algorithm}; for this reason we should rather call it
\emph{the Robinson--Schensted algorithm}. Nevertheless, we use the first name
because of its well-known acronym RSK. The RSK algorithm associates to a finite
sequence $w=(w_1,\dots,w_\ell)$ a~pair of tableaux: \emph{the insertion tableau
    $P(w)$} and \emph{the recording tableau~$Q(w)$}.

The insertion tableau 
\begin{equation}
\label{eq:insertion}
P(w) = \Big( \big( (\emptyset \leftarrow w_1) \leftarrow w_2 \big) 
                                      \leftarrow  \cdots \Big) \leftarrow w_\ell
\end{equation}
is defined as the result of the iterative Schensted insertion applied to the
entries of the sequence $w$, starting from \emph{the empty tableau $\emptyset$}.

The recording tableau $Q(w)$ is defined as the standard Young tableau of the
same shape as $P(w)$ in which each entry is equal to the number of 
the~iteration of \eqref{eq:insertion} in which the given box of~$P(w)$ stopped
being empty; in other words the entries of $Q(w)$ give the order in which the
entries of the~insertion tableau were filled.

The tableaux $P(w)$ and $Q(w)$ have the same shape; we will denote this common
shape by $\RSK(w)$ and call it \emph{the RSK shape associated to $w$}.

The RSK algorithm is of great importance in algebraic combinatorics, especially in
the context of the representation theory \cite{Fulton1997}.

\subsection{Plancherel measure, Plancherel growth process}
\label{sec:pgp}

Let $\Sym{n}$ denote the symmetric group of order $n$. We will view each
permutation $\pi \in\Sym{n}$ as a sequence $\pi=(\pi_1,\dots,\pi_n)$ which has
no repeated entries, and such that $\pi_1,\dots,\pi_n\in\{1,\dots,n\}$. A
restriction of RSK to the symmetric group is a bijection which to a given
permutation from $\Sym{n}$ associates a pair $(P,Q)$ of standard Young tableaux
of the same shape, consisting of $n$ boxes. A~fruitful area of study concerns
the RSK algorithm applied to a uniformly random permutation from $\Sym{n}$,
especially asymptotically in the limit $n\to\infty$, see \cite{Romik2015a} and
the references therein.

\emph{The Plancherel measure} on $\diagrams_{n}$, denoted $\Plancherel_n$, is
defined as the probability distribution of the random Young diagram $\RSK(w)$
for a uniformly random permutation $w$ selected from $\Sym{n}$.

\medskip

An \emph{infinite standard Young tableau} 
\cite[Section~2.2]{Kerov1999}
is a filling of the boxes
in a subset of the upper-right quarterplane with positive integers, such that
each row and each column is increasing, and each positive integer is used
exactly once.
There is a natural bijection between the set of infinite standard Young tableaux and the set of infinite sequences of Young diagrams
\begin{equation} 
    \label{eq:PGP0} 
    \lambda^{(0)} \nearrow \lambda^{(1)} \nearrow \cdots \qquad
    \text{with } \lambda^{(0)}=\emptyset;
\end{equation}
this bijection is given by setting $\lambda^{(n)}$ to be the set of boxes of a
given infinite standard Young tableau which are $\leq n$. 

 If
$w=(w_1,w_2,\dots)$ is an \emph{infinite} sequence, the recording tableau $Q(w)$
is defined as the infinite standard Young tableau in which each non-empty entry
is equal to the number of the iteration in the infinite sequence of Schensted insertions
\[   \big( (\emptyset \leftarrow w_1) \leftarrow w_2 \big) 
\leftarrow  \cdots 
\]
in which the corresponding box stopped being empty, see \cite[Section~1.2.4]{RomikSniady-AnnPro}. Under the aforementioned bijection, the recording tableau $Q(w)$ corresponds to the sequence \eqref{eq:PGP0}
with 
\[ \lambda^{(n)}= \RSK( w_1,\dots, w_n). \]

Let $\xi=(\xi_1,\xi_2,\dots)$ be an infinite sequence of independent, identically
distributed random variables with the uniform distribution $U(0,1)$ on the unit
interval $[0,1]$. \emph{The Plan\-che\-rel measure on the set of infinite standard
Young tableaux} is defined as the probability distribution of $Q(\xi)$.
		Any sequence with the same
probability distribution as \eqref{eq:PGP0} with 
\begin{equation}
    \label{eq:lambda-rsk}
    \lambda^{(n)}= \RSK( \xi_1,\dots, \xi_n) 
\end{equation}
will be called \emph{the Plancherel
    growth process} \cite{Kerov1999}. For a~more systematic introduction to this
topic we recommend the monograph \cite[Section~1.19]{Romik2015a}.

\subsection{The main result: Poisson limit theorem for the Plancherel
    growth process}

\newcommand{\magic}{\Delta}

\begin{theorem}
    \label{thm:poisson-growth-Plancherel}
Let $\lambda^{(0)} \nearrow \lambda^{(1)} \nearrow \cdots $ be the Plancherel growth process.
Let us fix $k\in\N_0$.
We denote by
\[ \Lambda^{(n)} = 
\left( \lambda^{(n)}_0, \dots, \lambda^{(n)}_k \right) \inrv \left(\N_0\right)^{k+1}\]
the random vector formed by the lengths of the bottom $k+1$ rows of the random diagram~$\lambda^{(n)}$.
For each $n\in\N_0$ we consider
the random function $\magic_n:\R\to \Z^{k+1}$ given by
\begin{equation} 
    \label{eq:growth-Poisson}
    \magic_n(t)= \Lambda^{(n_t)}-\Lambda^{(n)}
, 
\end{equation}
where
\begin{equation}
    \label{eq:nt}
     n_t = \max\left( n + \lfloor t \sqrt{n} \rfloor,\; 0 \right).
\end{equation}

Then, 
    for $n\to\infty$, the random function $\magic_n$ converges in distribution to a tuple $\left( N_0,\dots,N_k\right)$ of $k+1$ independent copies of the Poisson process with unit intensity.
This convergence is understood as convergence (with respect to the topology given by the total variation distance) of
    all finite-dimensional marginals $\left( \magic_n(t_1) ,\dots, \magic_n(t_\ell) \right)$  over all choices of $t_1,\dots,t_\ell \in \R$.
\end{theorem}

The proof is postponed to \cref{sec:proof-thm:poisson-growth-Plancherel}.

In \cref{sec:cor-1,sec:hammersley-process,sec:multi-line-hammersley-process,sec:link-ulam-hammersley} 
we shall discuss the connections of this theorem with the (multi-line)
Hammersley process and the \emph{assumption $\alpha$} of Hammersley,
and  in
\cref{sec:link-aldous-diaconis} the connections with the work of Aldous and Diaconis \AD. 
In \cref{sec:other-links,sec:generalization} we will
discuss links with some other areas of mathematics.

\begin{remark}
    \label{rem:derivative}
The fact that the intensity of the limiting Poisson process is equal to $1$ can be
justified by the following heuristic argument. 
The first order approximation
for the length of the given fixed row is given by
\[ \E \lambda_i^{(n)} \approx 2 \sqrt{n}, \]
cf.~\eqref{eq:ulam} for the special case of the bottom row.    
By considering the derivative of the right-hand side we can hope that 
\[  \E \lambda_i^{(n+ c \sqrt{n} )} -  \E \lambda_i^{(n)} \approx  c \sqrt{n} \frac{1}{\sqrt{n}} =c\]
which is consistent with the expected growth of the Poisson process with the unit intensity.
\end{remark}

\subsection{Local spacings in the bottom rows of the recording tableau} 
\label{sec:cor-1}

\cref{thm:poisson-growth-Plancherel} can be interpreted as a result about the
random sets of points in which the coordinates of the function
\eqref{eq:growth-Poisson} have jumps; this interpretation gives the following
immediate corollary. For an alternative proof see \cref{sec:proof-localQ}.

\begin{corollary}
\label{coro:localQ}
Let $\xi=(\xi_1,\xi_2,\dots)$ be an infinite sequence of independent, identically
distributed random variables with the uniform distribution $U(0,1)$ on the unit
interval $[0,1]$ and let
$Q(\xi)=\left[ Q_{x,y}\right]_{x,y\geq 0}$ be the corresponding random recording tableau with the Plancherel distribution. 

Then for any integer $k\in\N_0$ 
the collection of $k+1$ random sets
\begin{equation}  
\label{eq:setsQ}
\left( \left\{ \frac{Q_{x,y}-n}{\sqrt{n}} \ : \ x\in \N_0 \right\}  \ \ : \  \  y\in\{0,\dots,k\} \right) 
\end{equation}
converges in distribution, as $n\to\infty$, to a family of $k+1$ independent Poisson point
processes on $\R$ with the unit intensity.
\end{corollary}

\subsection{The Hammersley process}
\label{sec:hammersley-process}

\begin{figure}
    \centering
\subfloat[]{
\begin{tikzpicture}[scale=0.5]
\draw[thick,->] (-1,-1) -- (8,-1) node[anchor=west]{$x$};
\draw[thick,->] (-1,-1) -- (-1,8) node[anchor=south]{$t$};

\draw[blue] (2,0) -- (2,4) -- (1,4) -- (1,8);
\draw[blue] (3,0) -- (3,8);
\draw[blue] (7,0) -- (7,2) -- (5,2) -- (5,6) -- (4,6) -- (4,8);
\draw[blue] (6,5) -- (6,8);

\fill[blue!50!black] (1,4) circle(5pt);
\fill[blue!50!black] (5,2) circle(5pt);
\fill[blue!50!black] (4,6) circle(5pt);
\fill[blue!50!black] (6,5) circle(5pt);

\draw[red,thick] (2,4) +(0.2,0.2)-- +(-0.2,-0.2) +(0.2,-0.2)-- +(-0.2,0.2);
\draw[red,thick] (5,6) +(0.2,0.2)-- +(-0.2,-0.2) +(0.2,-0.2)-- +(-0.2,0.2);
\draw[red,thick] (7,2) +(0.2,0.2)-- +(-0.2,-0.2) +(0.2,-0.2)-- +(-0.2,0.2);
\end{tikzpicture}
\label{fig:HamA}
}
\hfill
\subfloat[]{
\begin{tikzpicture}[scale=0.5]
\draw[thick,->] (-1,-1) -- (8,-1) node[anchor=west]{$x$};
\draw[thick,->] (-1,-1) -- (-1,8) node[anchor=south]{$t$};

\draw[blue!50,dashed] (2,0) -- (2,4) -- (1,4) -- (1,8);
\draw[blue!50,dashed] (3,0) -- (3,8);
\draw[blue!50,dashed] (7,0) -- (7,2) -- (5,2) -- (5,6) -- (4,6) -- (4,8);
\draw[blue!50,dashed] (6,5) -- (6,8);

\draw[red,very thick] (7,2) -- (7,6) -- (5,6) -- (5,8);
\draw[red,very thick] (4.5,0) -- (4.5,4) -- (2,4) -- (2,8);

\fill[blue!50] (1,4) circle(5pt);
\fill[blue!50] (5,2) circle(5pt);
\fill[blue!50] (4,6) circle(5pt);
\fill[blue!50] (6,5) circle(5pt);

\draw[red!50!black,ultra thick] (2,4) +(0.2,0.2)-- +(-0.2,-0.2) +(0.2,-0.2)-- +(-0.2,0.2);
\draw[red!50!black,ultra thick] (5,6) +(0.2,0.2)-- +(-0.2,-0.2) +(0.2,-0.2)-- +(-0.2,0.2);
\draw[red!50!black,ultra thick] (7,2) +(0.2,0.2)-- +(-0.2,-0.2) +(0.2,-0.2)-- +(-0.2,0.2);
\end{tikzpicture}
\label{fig:HamB}
}
\caption{\protect\subref{fig:HamA}~The dynamics of the particles in the Hammersley process with some initial configuration of the particles. The time flows from bottom to top.
     \protect\subref{fig:HamB}~The second line of the multi-line Hammersley process.}
\end{figure}

The information about the sequence $w=(w_1,\dots,w_\ell)$ can be encoded by a
collection of points $(w_1,1), \dots, (w_\ell,\ell)$ on the plane (marked as
small disks on \cref{fig:HamA}). The time evolution of the bottom row of the
insertion tableau in the process of insertions \eqref{eq:insertion} can be
encoded by the time evolution of a collection of particles on the real line
(their trajectories are marked on \cref{fig:HamA} as blue zig-zag lines) which is subject to the
following dynamics. When we have reached one of the disks~$(x,t)$ (translation:
\emph{at time $t$, when a number $x$ is inserted into the bottom row of the insertion
tableau\dots}) one of the following happens: (i)~a~particle, which is first to the
right of $x$, jumps left to $x$ (translation: \emph{\dots the~newly inserted number $x$ bumps from the bottom row the
smallest number which is bigger than $x$}), or (ii)~a~new particle is created in
$x$ (translation: \emph{the~number $x$ is appended at the end of the bottom row}), see
\cref{fig:HamA} for an illustration.

If the locations of the disks on the upper halfplane are random, sampled
according to the Poisson point process on $I\times \R_+$ (for some specified set
$I\subseteq \R$) we obtain in this way the celebrated \emph{Hammersley process}
on $I$ \cite{Hammersley1972,Aldous1995}.

\medskip

The information about all bumpings from the bottom row of the insertion tableau
can be encoded by the dual corners \cite{Ferrari2009} (marked on
\cref{fig:HamA} by red X crosses). These crosses are used as an input for the
dynamics of the second row of the insertion tableau in an analogous way as the
disks were used for the dynamics of the bottom row, see \cref{fig:HamB}. In
other words, the output of the Hammersley process (which will be the first line
of the~\emph{multi-line Hammersley process} which we will construct) is used as
the input for the second line of the \emph{multi-line Hammersley process}. 

This procedure can be iterated; in this way the dynamics of all rows of the insertion
tableau is fully encoded by \emph{the multi-line Hammersley process}
\cite{Ferrari2009}.
The
name is motivated by the analogy with the tandem queues where the happy
customers who exit one waiting line are the input for the~second line.

\subsection{Limit distribution of the multi-line Hammersley process}
\label{sec:multi-line-hammersley-process}

As we already mentioned, the entries of the bottom row of the insertion tableau
can be interpreted as positions of the particles in (the de-Poissonized version~of) the Hammersley interacting particle process
on the unit interval $[0,1]$. Therefore the
following result (\cref{coro:localP} below) is a generalization of the result of Aldous and
Diaconis \AD which concerned only the special case
$k=0$ of the single-line Hammersley process (in the Poissonized setup). For a more detailed
discussion of the link between these results see \cref{sec:link-aldous-diaconis}.

The general case~$k\geq 0$ can be interpreted as a statement about the
convergence of the multi-line version of the Hammersley process
\emph{on the the unit interval $[0,1]$} to its stationary
distribution \emph{on the whole real line $\R$} which was calculated by Fan and
Seppäläinen \cite[Theorem 5.1]{Fan2020}.

Note that in his original paper \cite[Section 9]{Hammersley1972} Hammersley
considered the particle process with a discrete time parameter indexed by
non-negative integers. Slightly confusingly, this process with the modern
terminology would be referred to as \emph{the de-Poissonized version of the Hammersley
    process} (as opposed to \emph{the Hammersley process} in which the time is
continuous and the input is given by the Poisson point process on the quarterplane). It~follows that the
setup which we consider in \cref{coro:localP} coincides with the one from the
original paper of Hammersley. The special case $k=0$ of \cref{coro:localP}
was conjectured already by Hammersley \cite[\emph{``assumption~$\alpha$''} on
page~371]{Hammersley1972} who did not predict the exact value of the intensity of the Poisson process.

\begin{corollary}
    \label{coro:localP}

Let $\xi=(\xi_1,\dots\xi_n)$ be a sequence of independent,
identically distributed random variables with the uniform distribution $U(0,1)$
on the unit interval $[0,1]$ and let
\[ \left[ P^{(n)}_{x,y} \right]_{y\in\N_0,\; 0\leq x< \lambda^{(n)}_y} = P(\xi_1,\dots,\xi_n)\]
be the corresponding insertion tableau; we denote by $\lambda^{(n)}$ its shape.

    For any integer $k\in\N_0$ and any real number $0<w<1$ 
    the~collection of $k+1$ random sets $\myPset_0,\dots,\myPset_k$ with
\begin{equation}
     \label{eq:myPset}
       \myPset_y:= \left\{ \sqrt{n} \left( P^{(n)}_{x,y}-w\right) \ : \ 0\leq x < \lambda^{(n)}_y \right\}   
\end{equation}    
converges in distribution, as $n\to\infty$, to a family of $k+1$ independent Poisson point
    processes on $\R$ with the intensity $\frac{1}{\sqrt{w}}$.
    
    The above statement remains true for $w=1$ but the limit in this case is 
		a~family of $k+1$ independent Poisson point processes on the negative
halfline $\R_-$ with the unit intensity.
\end{corollary}

The key ingredient of
the proof is to use some symmetries of the RSK algorithm which allow to
interchange the roles of the insertion tableau and the recording tableau, 
see \cref{sec:proof-localP} for the details.

\medskip

We were inspired to state \cref{coro:localP,coro:localQ} by the work of
Azangulov \cite{AzangulovThesis2020} who studied fluctuations of the last entry
in the bottom row of $P^{(n)}$ around $w=1$; more specifically he proved that
the (shifted and rescaled) last entry in the bottom row
\[ \sqrt{n} \left( 1 - P^{(n)}_{0,\lambda_0^{(n)}-1} \right)\] converges in
law to the exponential distribution $\operatorname{Exp}(1)$.

\subsection{The idea behind the proof: the link between the Ulam's problem and the Hammersley's assumption $\alpha$}
\label{sec:link-ulam-hammersley}

The key idea behind the proof of \cref{thm:poisson-growth-Plancherel} lies in
the intimate interplay between \emph{the Ulam's problem} and \emph{the Hammersley's
assumption $\alpha$} which was already subject to investigation by several
researchers in this field.

\subsubsection{Ulam's problem}
\label{sec:ulamsproblem}

Recall that Ulam \cite{Ulam1961} asked about the value of the limit
\begin{equation}
    \label{eq:ulam}
     c = \lim_{n\to\infty} \frac{\E \lambda_0^{(n)}}{\sqrt{n}}. 
\end{equation}
The first solution to this problem consisted of two components: proving the lower bound
$c\geq 2$ and the upper bound $c\leq 2$; interestingly these two components have quite
different proofs. 

\smallskip

The lower bound $c\geq 2$ was proved independently by Logan and Shepp \cite{Logan1977} as well
as by Vershik and Kerov \cite{Versik1977} by finding explicitly the limit shape
of typical random Young diagrams distributed according to the Plancherel
measure. Both proofs were based on the hook-length formula for the number of
standard Young tableaux of prescribed shape and finding the minimizer of the
corresponding functional. An alternative approach which avoids the variational
calculus is to use the results of Biane \cite{Biane:2001aa} in order to show
that the (scaled down) \emph{transition measure} of a Plancherel-distributed random diagram
$\lambda^{(n)}$ converges in probability to \emph{the semicircle distribution}
and to deduce that the probability of the event $\lambda^{(n)}_0 < (2-\epsilon)
\sqrt{n}$ converges to zero for each $\epsilon>0$. 

This lower bound $c\geq 2$ will play an
important role in our paper and we will use it in order to show
\cref{prop:asymptotic-probability} (more specifically, we use it in
\cref{lem:liminf}).

\smallskip

The upper bound $c\leq 2$ is due to Vershik and Kerov \cite{Vershik1985a}.
This upper bound plays an even more important role in our paper.
We will come back to this topic in \cref{sec:idea} below.

\subsubsection{Hammersley's assumption $\alpha$}
\label{subsec:assumption-alpha}

Hammersley formulated his \emph{assumption~$\alpha$}
\cite[page~371]{Hammersley1972} as a rather vague statement 
(\emph{``It is reasonable to assume that the distribution of the discontinuities $y_i$
    is locally homogenous and random''}) which we interpret as
a conjecture that the local
behavior of the numbers in the bottom row of the insertion tableau $P^{(n)}$
after appropriate rescaling converges to some Poisson point process
with unspecified intensity, cf.~\cref{coro:localP}. 
Hammersley also gave an informal argument which explained how
\emph{the assumption $\alpha$} would give solution to the Ulam's problem and he
correctly predicted the value of the constant $c=2$.

The first proof of a result of the flavor of \emph{the assumption $\alpha$} is
the aforementioned work of Aldous and Diaconis \AD.
Interestingly, the proof starts with two separate parts: one which happens to
give an alternative proof for the upper bound $c\leq 2$ for the constant in
the Ulam's problem, and one which happens to give an alternative proof for the lower
bound $c\geq 2 $. Finally, the combination of these two results gives 
the desired proof of \emph{the assumption~$\alpha$}.

The arguments in the aforementioned papers \cite{Hammersley1972,Aldous1995} were
based on a  probabilistic analysis of the Hammersley process viewed as an
interacting particle system (see \cref{sec:hammersley-process}) and thus were
quite different from those mentioned in \cref{sec:ulamsproblem}.

\subsubsection{The idea of the proof}
\label{sec:idea}

As we can see from the aforementioned papers \cite{Hammersley1972,Aldous1995},
the Ulam's problem and the assumption $\alpha$ are intimately related one with
another and a solution to one of them gives (at least heuristically) the
solution to the other one. From this perspective it is somewhat surprising that
the original solution to the Ulam's problem contained in the papers
\cite{Logan1977,Versik1977,Vershik1985a} did not result with a corresponding
proof of the assumption~$\alpha$ in the language of the Plancherel growth process
and random Young diagrams. The current paper fills this gap.

\medskip

Our strategy is to revisit the proof of the upper bound $c\leq 2$ which is due to Vershik and Kerov
{\begin{otherlanguage*}{russian}\cite[Раздел~3, Лемма~6]{Vershik1985a}\end{otherlanguage*}}
(see also \cite[Section 3, Lemma 6]{Vershik1985} for the English translation;
be~advised that there are \emph{two} lemmas having number 6 in this paper). With
the notations used in our paper (see the proof of
\cref{lem:single-step-plancherel-conditional}), this proof can be rephrased as
an application of the Cauchy--Schwarz inequality for a clever choice of a pair
of vectors $X$ and~$Y$ of (approximately) unit length. It is somewhat surprising that such a coarse bound as
Cauchy--Schwarz inequality gives the optimal upper bound $c\leq 2$ for the Ulam's
constant. This phenomenon is an indication that the Cauchy--Schwarz inequality is
applied here in a setting in which it becomes (asymptotically) saturated, which
implies that the vectors $X$ and~$Y$ are (approximately) multiples of one
another and therefore $X\approx Y$. 
It follows in particular that
\begin{equation} 
    \label{eq:scalar} 
    \langle X-Y,\;\; Y \rangle \approx 0. 
\end{equation}

Let $\lambda^{(0)} \nearrow \lambda^{(1)} \nearrow \cdots $ be the Plancherel
growth process. It turns out that a slight modification of the  left-hand side of \eqref{eq:scalar} has a
natural probabilistic interpretation as the total variation distance between:
\begin{itemize}
    \item the probability distribution of the Young diagram $\lambda^{(n)}$, 
    and
\item the \emph{conditional} probability distribution of the Young diagram
$\lambda^{(n)}$ \emph{under the condition} that the growth between the
diagrams $\lambda^{(n-1)}$ and $\lambda^{(n)}$ occurred in a specified row.
\end{itemize}
In particular, \eqref{eq:scalar} implies that this total variation distance converges to zero as $n\to\infty$,
see \cref{lem:single-step-plancherel-conditional} for a precise statement.

Heuristically, this means that the information about 
the number of the row~$r^{(n)}$ in which the growth
occurred between $\lambda^{(n-1)}$ and $\lambda^{(n)}$  
does not influence too
much the distribution of the resulting random Young diagram~$\lambda^{(n)}$. 
Since the Plancherel growth process is a Markov process,
this argument can be iterated to show that 
the numbers of the rows 
\begin{equation}\label{eq:rows}
    r^{(n+1)},\dots,r^{(n+\ell)} 
\end{equation}
in which the growths occur 
in the part of
the Plancherel growth process $\lambda^{(n)}\nearrow \cdots
\nearrow\lambda^{(n+\ell)}$
are approximately independent random variables, see
\cref{thm:independent-explicit}  for a precise statement. Various variants of the  assumption $\alpha$ are now simple corollaries. 
 
This approach gives some additional information --- which does not seem to
be available by the hydrodynamic approach \cite{Aldous1995,Cator2005} --- about the
asymptotic independence of the rows \eqref{eq:rows} \emph{and} the shape of the final Young diagram
$\lambda^{(n+\ell)}$, see \cref{thm:independent-explicit} for more detail. This
additional information will be essential in our forthcoming paper devoted to the
refined asymptotics of the bumping routes \cite{MMS-Bumping2020}.

\medskip

The aforementioned \cref{lem:single-step-plancherel-conditional} can be seen as
an additional step in the reasoning which was overlooked by the authors of
\cite{Vershik1985a}. The monograph of Romik \cite[Section 1.19]{Romik2015a}
contains a~more pedagogical presentation of these ideas of Vershik and Kerov; in
the following we will use Romik's notations with some minor adjustments.

\medskip

Note that, analogously as the proof of Aldous and Diaconis \AD, our proof of the
assumption $\alpha$ is based on combining two components: the one which is
related to the lower bound $c\geq 2$ in the Ulam's problem (see \cref{lem:liminf})
and the one related to the upper bound $c\leq 2$ (see the proof of
\cref{lem:single-step-plancherel-conditional}) and only combination of these two
components completes the proof.

\subsection{The link with the work of Aldous and Diaconis}
\label{sec:link-aldous-diaconis}

\newcommand{\ttt}{s}
\newcommand{\NNN}{\mathbf{N}^+}
\newcommand{\aaa}{w}

Following the notations from \cite{Aldous1995} we consider the
Hammersley process on $\R_+$ (starting from the empty configuration of the particles) 
and denote by $\NNN(x,t)$ the number of particles at
time $t$ which have their spacial coordinate $\leq x$.
Aldous and Diaconis \AD proved that for each fixed $\aaa>0$ the counting process 
\begin{equation}
    \label{eq:AD1}
     \left( \NNN(\aaa\ttt +y, \ttt) -
\NNN(\aaa\ttt,\ttt), \; y\in\R \right)
\end{equation}
converges in distribution, as $\ttt \to\infty$, to the
Poisson counting process with intensity~$\aaa^{-\nicefrac{1}{2}}$.

\subsubsection{The link of \cref{coro:localP}}

In the following we sketch very briefly the proof that the special case of \cref{coro:localP} which
corresponds to $k=0$ is equivalent to the aforementioned result of Aldous and Diaconis.

Due to the \emph{space-time scale invariance} \cite[Lemma 4]{Aldous1995},
the stochastic process \eqref{eq:AD1} has the same distribution as
\begin{equation}
    \label{eq:AD2}
    \left( \NNN \! \left(\aaa+\frac{y}{\ttt},\ttt^2\right) -
    \NNN \! \left(\aaa,\ttt^2\right), \; y\in\R \right).
\end{equation}
Let $\eta_1(\ttt^2)<\eta_2(\ttt^2)<\cdots$ denote the positions of the particles at
time~$\ttt^2$. The result of Aldous and Diaconis can be therefore rephrased as
convergence in distribution of the set of jumps of the function \eqref{eq:AD2}
which is equal to
\begin{equation}
    \label{eq:myset}
     \left\{  \ttt \cdot \left[ \eta_i \! \left(\ttt^2\right) - \aaa \right] : i\in\N \right\} 
\end{equation}
towards the Poisson point process with the intensity $\aaa
^{-\nicefrac{1}{2}}$.

For simplicity we restrict our attention to $0<\aaa<1$ 
(the general case \mbox{$\aaa>0$} can be obtained by an application of 
a slightly more involved space-time scale invariance). 
The above result does not change if we modify our setup and consider the Hammersley process on the unit
interval $[0,1]$. The number of disks (which are the input for the Hammersley
process) in the rectangle $[0,1] \times [0,t]$ is 
equal to the value $N(t)$ of the Poisson process at time~$t$. 
With the notations of \cref{coro:localP}, 
the entries $\left( P_{x,0}^{(n)} : 0\leq x < \lambda_0^{(n)} \right)$ of the bottom row 
in the insertion tableau $P^{(n)}$ can be interpreted as
the coordinates of the particles in the Hammersley process at the time when $n$
disks appeared; it follows that the set \eqref{eq:myset} is equal to
\begin{equation}
    \label{eq:myset2}
    \left\{ s \cdot \left[
P_{x,0}^{(N(s^2))} -\aaa \right] \ : \ 0\leq x < \lambda^{(n)}_0 \right\}.
\end{equation}

In this way we proved that the distribution of the random set \eqref{eq:myset2} (which
appears in a reformulation of the result of Aldous and Diaconis) is the mixture
of the probability distribution of the random set $\myPset_0$ given by \eqref{eq:myPset}
which corresponds to the bottom row, rescaled by the factor~$\frac{s}{\sqrt{n}}$. The mixture is taken over
$n:=N(s^2)$ which has the Poisson distribution $\operatorname{Pois}(s^2)$. Since the
scaling factor $\frac{s}{\sqrt{n}}$ converges in probability to~$1$ as $s\to\infty$, the scaling
is asymptotically irrelevant. 

We proved in this way that the result of Aldous and Diaconis \AD is a consequence of the special case of \cref{coro:localP} for $k=0$, obtained
by a straightforward Poissonization procedure.

\medskip

The implication in
the opposite direction is more challenging, but general de-Poissonization
techniques~\cite{Jacquet1998}
can be used to show that the result of Aldous and Diaconis implies the special
case $k=0$ of our \cref{coro:localP}.

\subsubsection{The link of \cref{thm:poisson-growth-Plancherel}}

In the following we sketch the proof that the special case of
\cref{thm:poisson-growth-Plancherel} which corresponds to $k=0$ is equivalent
to the result of Aldous and Diaconis \eqref{eq:AD1}. For simplicity we will consider only
the special case $\aaa=1$; the general case $\aaa>0$ would follow from a slightly
more complex scaling of the space-time.

\medskip

Due to the \emph{space-time scale invariance} \cite[Lemma~4]{Aldous1995},
the stochastic process \eqref{eq:AD1} in the special case $\aaa=1$ has the same distribution as
\[     \left( \NNN \! \left(  \ttt^2  +y \ttt, \, 1\right) -
    \NNN \! \left( \ttt^2, \, 1\right), \; y\in\R \right)
    \]
and, due to the \emph{space-time interchange property} \cite[Lemma~3]{Aldous1995}, the same distribution as 
\begin{equation}
    \label{eq:process}
         \left( \NNN \! \left(  1,\, \ttt^2  +t \ttt  \right) -
\NNN \! \left( 1,\, \ttt^2\right), \; t \in\R \right).
\end{equation}
By \AD the process \eqref{eq:AD1} for $\aaa=1$ or, equivalently, the process
\eqref{eq:process} converges for $\ttt\to\infty$ to the Poisson process with the unit
intensity. This result does not change if we modify our setup and consider the
Hammersley process on the unit interval $[0,1]$.

For each $t\geq 0$ the number of disks (which are the input for the Hammersley
process) in the rectangle $[0,1] \times [0,t]$ is equal to the value $N(t)$ 
of the Poisson process at time $t$. 
Thus the number of
all particles at time $t$, 
given by  $\NNN(1,t)=\lambda_0^{(N(t))}$,
 is equal to the length of the bottom row of the
insertion tableau after $N(t)$ disks appeared. In this way we proved that the
probability distribution of the process \eqref{eq:process} coincides with the
probability distribution of the process
\begin{equation}
    \label{eq:gekon}
     \left( \lambda_0^{\left( N(s^2+t s) \right) } - \lambda_0^{\left( N( s^2 )\right) } ,
\;   t\in \R 
\right).
\end{equation}

We set $n:=N(s^2)$ and 
\begin{equation}
    \label{eq:panTau}
     \tau(t)  = \frac{ N(s^2+t s) - N(s^2) }{\sqrt{N(s^2)}} 
\end{equation}
so that $ N(s^2+t s) = n_{\tau(t)}$ with the notations from \eqref{eq:nt}. In the
case when $N(s^2)=0$ and \eqref{eq:panTau} is not well-defined, we declare that
$n_{\tau(t)}= N(s^2+t s)$ by definition. Since the probability of the event that
$N(s^2)=0$ converges to zero as $s\to\infty$, this will not create problems in the following.

The random function $\tau$
converges in probability, as $s\to\infty$, to the identity map 
$t\mapsto t$ uniformly over compact subsets.
With these notations 
the
probability distribution of the process \eqref{eq:process} coincides with the
probability distribution of the process
\begin{equation}
    \label{eq:gekon2}
    \left( \lambda_0^{\left(n_{\tau(t)} \right) } - \lambda_0^{\left( n \right) } ,
    \;   t\in \R 
    \right).
\end{equation}
By taking the average over the random values of $n$ and $\tau$
it follows that the special case of  \cref{thm:poisson-growth-Plancherel} for
$k=0$ implies that \eqref{eq:gekon2} indeed converges to the Poisson process.
We proved in this way that the result of Aldous and Diaconis \AD (at least in the special case $w=1$) is
a consequence of the special case of \cref{thm:poisson-growth-Plancherel} for $k=0$, obtained
by a rather straightforward Poissonization procedure.

\medskip

The implication in
the opposite direction is more challenging, but again general de-Poissonization
techniques~\cite{Jacquet1998}
can be applied.

\subsection{Asymptotics of the bottom rows}
\label{sec:other-links}

The research related to the Ulam's problem culminated in the works of Baik, Deift
and Johansson \cite{Baik1999, Baik2000} as well as its extensions
\cite{Borodin2000,Okounkov2000,Johansson2001}. Roughly speaking, these results
say that the suitably normalized lengths of the bottom rows of a Plancherel
distributed random Young diagram converge to an explicit non-Gaussian limit which is
related to the eigenvalues of a large GUE random matrix, see the monograph
\cite{Romik2015a} for a more pedagogical introduction.

Such a non-Gaussian limit behavior for the lengths of the bottom rows is at a
sharp contrast with our \cref{thm:poisson-growth-Plancherel} which states that
the growths of the bottom rows $\Delta_n(t)$ are given by the Poisson process
which with the right scaling converges to the Brownian motion. This discrepancy
is an indication that the process $\Delta_n(t)$ considered in that theorem
\emph{cannot} be approximated by the Poisson process in the scaling as $t=t(n)\gg
1$. It would be interesting to find the scaling in which this passage from the
regime of independent growths to the non-Gaussian regime related to the random
matrices occurs. See also \cref{problem:32}.

\subsection{Possible generalization of \cref{coro:localP}: the bottom rows in the Schur--Weyl insertion tableau}
\label{sec:generalization}

\newcommand{\myletter}{w}

We suspect that \cref{coro:localP} is a special case of a conjectural result
which would hold in a much wider generality. In the current section we will present
the details.

\subsubsection{Schur--Weyl insertion tableau}
For given positive integers $d$ and $n$ let $\myletter=(\myletter_1,\dots,\myletter_n)$ be a
sequence of independent, identically distributed random variables with the
uniform distribution on the discrete set $\{1,\dots,d\}$ with $d$ elements; we
denote by $P=P(\myletter_1,\dots,\myletter_n)$ the corresponding insertion
tableau. This random tableau appears naturally in the context of \emph{the
    Schur--Weyl duality} which we review in the following. The tensor product $\big(
\mathbb{C}^d \big)^{\otimes n}$ has a natural structure of a $\Sym{n} \times
\operatorname{GL}_d(\mathbb{C})$-module; the irreducible components are naturally indexed by Young diagrams.
\emph{The Schur--Weyl measure} is defined as the probability distribution on
Young diagrams which corresponds to sampling a random irreducible component of
$\big( \mathbb{C}^d \big)^{\otimes n}$ with the probability proportional to the
dimension of the component, see \cite{Meliot:2011vr} for the details. The
probability distribution of the shape of $P$ coincides with the Schur--Weyl
measure; for this reason we will call the random tableau $P$ itself \emph{the Schur--Weyl
    insertion tableau}.
 
In the scaling when $d,n\to\infty$ tend to infinity in such a way that $d\simeq c
\sqrt{n}$ for some $c> 0$, there is a law of large numbers for \emph{the
    global form} of the scaled down insertion tableau $P$, see \cite[Remark 1.6 and
Section 1.7.3]{Matsumoto:2020tb}.

\medskip

In the following we will concentrate on another aspect of the Schur--Weyl
insertion tableau~$P$, namely in the entries which are located in the bottom row,
near its end. 
For an integer $i\in\{1,\dots,d\}$ we denote by $M_i=M_i(P)$ 
the number of occurrences of $i$
in the bottom row of $P$; 
our problem is to understand the joint
distribution of the family of random variables $M_i$ indexed by $i\in I$ in some
interval of the form 
\begin{equation}
    \label{eq:my-interval}
     I=\{d+1-\ell,d+2-\ell,\dots,d\}
\end{equation}
for some choice of an integer $\ell\geq 1$.

 There are two interesting limits which one can consider in this setup,
namely the one when the size of the alphabet $d\to\infty$ tends to infinity, and
the one when the length of the sequence $n\to\infty$ tends to infinity.  In the
following we will review these two limits and discuss what happens when these two
limits are iterated.

\subsubsection{The limit $d\to\infty$}
\label{sec:limit-d}

When $d\to\infty$ and the length $n$ of the sequence is fixed, 
the probability that $\myletter_1,\dots,\myletter_n$ are all distinct converges
to $1$. In the following we consider the conditional probability space for which
$\myletter_1,\dots,\myletter_n$ are all distinct; it is easy to check that such a
conditional joint distribution of the ratios
$\frac{\myletter_1}{d},\dots,\frac{\myletter_n}{d}$ converges to that of a tuple
$\xi_1,\dots,\xi_n$ of independent random variables with the uniform distribution
$U(0,1)$. It follows that the (conditional, and hence unconditional as well)
probability distribution of the tableau $\frac{1}{d} P$ (which is obtained from
$P$ by dividing each entry by $d$) converges to the probability distribution of the insertion tableau
$P(\xi_1,\dots,\xi_n)$.

We see that the iterated limit in which we \emph{first} take the limit as the
size of the alphabet $d\to\infty$ tends to infinity and \emph{then} the limit
$n\to\infty$ as the length of the sequence tends to infinity, is the one in which we
recover \cref{coro:localP}.  Heuristically, we can expect that if $n\to\infty$
and $d=d(n)\gg n^2$ tends to infinity fast enough, 
each integer from the interval $I$ given by \eqref{eq:my-interval}
of moderate length $\ell=O\left( \frac{d}{\sqrt{n}} \right)$
will not appear more than once
(except for asymptotically negligible probability); and that
the probability that a given integer $i\in I $ will appear in the bottom row is
of order 
\[
q =  \frac{n}{d} \cdot \frac{1}{\sqrt{n}}  = \frac{\sqrt{n}}{d}\approx \frac{\sqrt{n}}{d+\sqrt{n}}\ll 1
\] 
which is approximately the product of the probability that $i$ belongs to the sequence $\myletter$ (which is roughly $\frac{n}{d}$) and the probability that 
a given large entry of the sequence $\myletter$ will be in the bottom row (which is roughly $\frac{1}{\sqrt{n}}$ by \cref{coro:localP}).

Such a Bernoulli probability distribution on the set $\{0,1\}$ with the success
probability $q$ can be approximated by the geometric distribution on the set of
non-negative integers with the parameter $p=1-q$. We formalize this informal discussion as the
following conjecture.

\begin{conjecture}
    \label{conjecture:geometric}
    Let $d=d(n)$ and $\ell=\ell(n)$ be as above. 
    
    Then
    the total variation distance between:
    \begin{itemize}
        \item the random vector of the multiplicities
        \[ \left(  M_i  : i\in I\right), \]  
        and
        \item 
        the collection of $\ell$ independent random variables, each with the geometric distribution $\operatorname{Geo}(p)$ with the parameter
        \[p=1-\frac{\sqrt{n}}{d+\sqrt{n}}=\frac{d}{d+\sqrt{n}}\]
    \end{itemize}
    converges to zero, as $n\to\infty$.
\end{conjecture}

In the following we will discuss whether this conjecture is plausible also in
some other choices of the scaling for the parameters $d$ and $n$.

\subsubsection{The limit $n\to\infty$} 
\label{sec:limit-n}

For the purposes of the current section by \emph{the $d\times d$ GUE random matrix}
we understand the random matrix with the Gaussian distribution supported on
Hermitian $d\times d$ matrices, which is invariant under conjugation by unitary
matrices, and normalized in such a way that the variance of each diagonal entry
is equal to $d$. %
The \emph{traceless GUE random matrix} is obtained from the above random matrix by
subtracting a multiple of the identity matrix in such a way that the trace of the
outcome is equal to zero.

\newcommand{\restab}[1]{\lambda^{(\downarrow #1)}}
For
$m\in\{0,1,\dots,d\}$ we denote by 
\[ \restab{m}= \big(\restab{m}_0,\dots, \restab{m}_{m-1}  \big) = \operatorname{sh} P^{\leq m}  \]
the shape of the tableau 
$P^{\leq m}$ which is obtained from the Schur--Weyl random insertion tableau $P$ by
keeping only the boxes which are at most $m$; in particular $\restab{d}$ is
the shape of $P$. 
Recall that $P$ and, as a consequence, $\restab{m}$ as well, depend implicitly on the choice of the positive integers $d$ and $n$.
In the following we fix the value of the integer $d\geq 1$ and let $n$ vary.
 Johansson \cite[Theorem 1.6]{Johansson2001}
proved that the distribution of the random vector %
    \[  
     \frac{d \restab{d} - n}{\sqrt{n}}
     \]
converges, as $n\to\infty$, to the joint distribution of the eigenvalues 
$\operatorname{spec} X$ of the
\emph{traceless $d\times d$ GUE random matrix} $(X_{ij})_{1\leq i,j\leq d}$ . 
This result can be further
extended using the ideas of Kuperberg \cite{Kuperberg:2002tq}; 
one can show that the joint distribution of the random vectors
\[  \frac{d \restab{d} - n}{\sqrt{n}}, \quad 
\frac{d \restab{d-1} - n}{\sqrt{n}}, \quad \dots, \quad
\frac{d \restab{1} - n}{\sqrt{n}}\]
converges in distribution to the joint distribution of the eigenvalues of $X$,
together with the eigenvalues of the minors of $X$ obtained by iterative removal of the
last row and the last column:
\begin{equation}
    \label{eq:spectre}
     \operatorname{spec} (X_{ij})_{1\leq i,j\leq d}, \quad
\operatorname{spec} (X_{ij})_{1\leq i,j\leq d-1}, \quad \dots, 
\quad \operatorname{spec} (X_{ij})_{1\leq i,j\leq 1}.
\end{equation}
Since the content of the current section is mostly heuristic, we skip the details
of the proof; the key point is to use \cite[Corollary~5.2]{Collins:2009wg}.

With the above notations, for $m\in\{2,\dots,d\}$ we have that 
$M_m=\restab{m}_0-\restab{m-1}_0$
is the number of entries in the bottom row of 
the tableau $P$ which are equal to $m$. Also, for $m\in\{1,\dots,d\}$ 
let $\mu^{(m)}=\max \big( \operatorname{spec} (X_{ij})_{1\leq i,j\leq m}\big)$ be
the largest eigenvalue of the minor $(X_{ij})_{1\leq i,j\leq m}$. The
aforementioned result implies, in particular, that the probability distribution
of the random vector
\begin{equation}
    \label{eq:bottom-row}
     \frac{d}{\sqrt{n}}\ \big( M_d, \; M_{d-1}, \; \dots, \; M_2 \big) 
\end{equation}
which describes the content of the bottom row of $P$ (after disregarding all entries equal to $1$)
converges, as $n\to\infty$, to the joint distribution of the entries of the random vector
\begin{equation}
    \label{eq:largest-diff}
    \left(  \mu^{(d)}-\mu^{(d-1)}, \ \ \mu^{(d-1)}-\mu^{(d-2)},
 \ \ \dots,  \ \ \mu^{(2)}-\mu^{(1)} \right). 
\end{equation}

Gorin and Shkolnikov \cite[Corollary~1.3]{Gorin2017} studied the asymptotics
$d\to\infty$ of the probability distribution of any prefix of a fixed length $\ell$ of
the random vector \eqref{eq:largest-diff}. In this scaling the difference between
\emph{the GUE random matric} and \emph{the traceless GUE random matrix}
turns out to be irrelevant. 
More specifically, they proved that for each fixed value of an integer $\ell\geq 1$ the joint distribution of the random variables  
\[    \left(  \mu^{(d)}-\mu^{(d-1)},\ \  \mu^{(d-1)}-\mu^{(d-2)},\ \ 
    \dots, \ \ \mu^{(d+1-\ell)}-\mu^{(d-\ell)} \right) \]
converges to the distribution of $\ell$ independent random variables, each with the
exponential distribution $\operatorname{Exp}(1)$.

By combining these results it follows that for each fixed integer $\ell\geq 1$
the probability distribution
of the random vector
\[    \frac{d}{\sqrt{n}}\ \big( M_d, \; M_{d-1}, \; \dots, \; M_{d+1-\ell} \big) \]
converges to that of independent exponential random variables
in the iterated limit in which we \emph{first} take the
limit $n\to\infty$ as the length of the sequence tends to infinity, and \emph{then}
the limit as the size of the alphabet $d\to\infty$ tends to infinity.
Since the exponential distribution arises naturally as a limit of the geometric distribution $\operatorname{Geo}(p)$ in the scaling as $p\to 0$, 
we suspect that the following stronger result is true.
\begin{conjecture}
\cref{conjecture:geometric} remains true if $d=d(n)$ is a sequence of positive integers which tends to
infinity in a sufficiently slow way and $\ell\geq 1$ is a fixed integer.
\end{conjecture}

\subsubsection{The joint limit $d,n\to\infty$}

Heuristically, each of the iterated limits considered at the very end of the
\cref{sec:limit-d,sec:limit-n} can be seen as a limit in which both variables
$d,n\to\infty$ tend to infinity in such a way that one of these variables grows
much faster than the other one. With these two extreme cases covered one can wonder whether \cref{conjecture:geometric} holds true in general. 

\begin{conjecture}
    \label{conjecture:general}
    \cref{conjecture:geometric} remains true if $d=d(n)$ is an arbitrary sequence
of positive integers which tends to infinity and $\ell=\ell(n)$ is a sequence
of positive integers which is either constant or tends to infinity in such a
way that $\ell=O\left( \frac{d}{\sqrt{n}} \right)$.
\end{conjecture}

Particularly interesting is the \emph{balanced scaling} in which $n$ and $d$ tend
to infinity in such a way that $\frac{d}{\sqrt{n}}$ converges to some limit so
that the law of large numbers applies to the shape of $P$ \cite[Section
3]{Biane:2001aa} as well as to the tableau $P$ itself \cite[Remark 1.6 and
Section 1.7.3]{Matsumoto:2020tb}.  \cref{conjecture:general} concerns the entries
in the bottom row; we conjecture that an analogous result for any fixed number of
bottom rows remains true.

\section{Estimates for the total variation distance}
\label{sec:application-thm-poisson}

Our main tool for proving the main results of the paper is
\cref{thm:independent-explicit}. It
gives an insight into the way in which the first rows of a Young diagram
develop in the~Plancherel growth process  (thanks to this part we will have
\cref{thm:poisson-growth-Plancherel} as a straightforward corollary), together
with the information about the global shape of the Young diagram. This latter
additional information will be key for the developments in our forthcoming paper \cite{MMS-Bumping2020}.

\subsection{Total variation distance}

Suppose that $\mu$ and $\nu$ are probability measures on the same discrete set $S$. Such
measures can be identified with real-valued functions on $S$. 
We define the \emph{total
    variation distance} between the measures $\mu$ and $\nu$
\begin{equation} 
\label{eq:TVD1}
\delta( \mu, \nu) := \frac{1}{2} \left\| \mu-\nu \right\|_{\ell^1}  = \max_{X\subset S} \left| \mu(X) - \nu(X) \right|
\end{equation}
as half of their $\ell^1$ distance as functions. 
If $X$ and $Y$ are two random variables with values in the same discrete set
$S$, we define their total variation distance $\delta(X,Y)$ as the total
variation distance between their probability distributions.

\medskip

Several times we will use the following simple lemma. 

\newcommand{\TVDindependent}{\cref{lem:TVD-independent}\ref{item:TVD-independent}\xspace}

\begin{lemma}
    \label{lem:TVD-independent} 

\

\begin{enumerate}[label=(\alph*)]
    
    \item \label{item:TVD-factorize}
    Let $X=(X_1,X_2)$ and $Y=(Y_1,Y_2)$ be random vectors with independent
coordinates and such that their first coordinates have equal distribution: $X_1 \overset{d}{=} Y_1$. Then the total variation
distance between the vectors is equal to the total variation distance between
their second coordinates:
    \[ \delta(X,Y)= \delta(X_2,Y_2).\]
    
    \item \label{item:TVD-independent}
    Let $X=(X_1,\dots,X_\ell)$ and
$Y=(Y_1,\dots,Y_\ell)$ be random vectors with independent coordinates. Then
the total variation distance between the random vectors is bounded by the sum
of the coordinate-wise total variation distances:
    \[ \delta(X,Y) \leq \sum_{1\leq i \leq \ell} \delta(X_i,Y_i).\]
    
   \item \label{item:TVD-convolution}
    
    If $\mu_1,\dots,\mu_\ell$ and $\nu_1,\dots,\nu_l$ are discrete measures on
some vector space then the total variation distance between their convolutions is bounded by 
the sum of the summand-wise total variation distances:
\[  \delta\left( \mu_1 \ast \cdots \ast \mu_\ell, \; \nu_1 \ast \cdots \ast \nu_\ell \right)
\leq \sum_{1\leq i \leq \ell} \delta(\mu_i,\nu_i).
\]
\end{enumerate}    

\end{lemma}
This result seems to be folklore wisdom \cite{1558845}, nevertheless we failed to find a conventional reference and we provide a proof below.

\begin{proof}
For part \ref{item:TVD-factorize} we view the total variation distance $ \delta(X,Y)$ as half of
the appropriate $\ell^1$ norm. This double sum factorizes thanks to independence.

\medskip

For the part \ref{item:TVD-independent} we consider a collection of
random vectors given by 
\[Z^i= (Y_1,\dots,Y_i,X_{i+1},\dots,X_\ell) \qquad \text{for
$i\in\{0,\dots,\ell\}$.}\] 
In this way $Z^0=X$ and $Z^\ell=Y$; the neighboring
random vectors $Z^{i-1}$ and $Z^{i}$ differ only on the $i$-th coordinate. By the
triangle inequality
\[ \delta(X,Y) \leq  \sum_{1\leq i \leq \ell} \delta(Z^{i-1},Z^{i})=
\sum_{1\leq i \leq \ell } \delta(X_i,Y_i),
 \]
 where the last equality is a consequence of part \ref{item:TVD-factorize}.
 
\medskip

In order to prove part \ref{item:TVD-convolution} we shall use
\ref{item:TVD-independent} with the special choice that $X_i$ is a random
variable with the distribution $\mu_i$ and $Y_i$ is a random variable with the
distribution $\nu_i$. 
An application of the same measurable map to both arguments
$X=(X_1,\dots,X_\ell)$ and $Y=(Y_1,\dots,Y_\ell)$
cannot increase the total variation distance between them, so
\[ \delta\left( \mu_1 \ast \cdots \ast \mu_\ell, \; \nu_1 \ast \cdots \ast \nu_\ell \right)=\delta( X_1+\cdots+ X_\ell, Y_1+\cdots+Y_\ell) \leq \delta(X,Y). \]
The application of part \ref{item:TVD-independent} to the right-hand side completes the proof.
\end{proof}

\subsection{Growth of rows in Plancherel growth process}

\label{sec:plancherel-growth-process-independent}

Let us fix an integer $k\in\N_0$. We define the finite set $\mathcal{N}=\{0,1,\dots,k,\infty\}$
which can be interpreted as the set of the natural numbers from the perspective
of a person who cannot count on numbers bigger than $k$ (for example, for $k=3$
we would have \emph{``zero, one, two, three, many''}).

Let $\lambda^{(0)}\nearrow \lambda^{(1)} \nearrow \cdots$ be the Plancherel growth
process. For integers $n\geq 1$ and $r\in \N_0$ we denote by $E^{(n)}_r$ the
random event which occurs if the~unique box of the skew diagram $\lambda^{(n)} /
\lambda^{(n-1)}$ is located in the row with the index $r$.
For $n\geq 1$ we define the random variable
$R^{(n)}$ which takes values in $\mathcal{N}$ and which is given by
\begin{equation*}
R^{(n)} = 
\begin{cases}
r      & \text{if the event $E^{(n)}_r$ occurs for $0\leq r\leq k$},\\
\infty  & \text{if the event $E^{(n)}_r$ occurs for some $r>k$},
\end{cases}
\end{equation*}
and which --- from the perspective of the aforementioned person with limited
counting skills --- gives the number of the row in which the growth occurred.

\medskip

\newcommand{\nzero}{m}

Let $\ell=\ell(\nzero)$ be a sequence of non-negative integers such that 
\[ 
\ell=O\left(\sqrt{\nzero} \right).
\]
For a given integer $\nzero\geq (k+1)^2$ we focus on the
specific part of the Plancherel growth process
\begin{equation}
\label{eq:focus} 
\lambda^{(\nzero)}\nearrow \cdots \nearrow\lambda^{(\nzero+\ell)}.
\end{equation}
We will encode some partial information about the growths of the rows as well
as about the final Young diagram in \eqref{eq:focus}
by the random vector
\begin{equation}
\label{eq:vn0}
V^{(\nzero)}= 
\left( 
R^{(\nzero+1)}, \: \dots , \: R^{(\nzero+\ell)}, \:
\lambda^{\left( \nzero+\ell\right)} \right) \inrv \mathcal{N}^{\ell}
\times \diagrams.   
\end{equation}

We also consider the random vector
\begin{equation}
\label{eq:ovV-def} 
\overline{V}^{(\nzero)}= 
\left( 
\overline{R}^{(\nzero+1)}, \: \dots , \: \overline{R}^{(\nzero+\ell)}, \:
\overline{\lambda}^{\left( \nzero+\ell\right)} \right) \inrv \mathcal{N}^{\ell}
\times \diagrams   
\end{equation}
which is defined as a sequence of independent random variables; the random
variables $\overline{R}^{(\nzero+1)},\dots,\overline{R}^{(\nzero+\ell)}$ have the
same distribution given by
\begin{align}
    \label{eq:distribution-rbar} 
\PP\left\{ \overline{R}^{(\nzero+i)}= r \right\} &= \frac{1}{\sqrt{\nzero}} 
& \text{for }r\in\{0,\dots,k\},
\\
 \label{eq:distribution-rbar2} 
\PP\left\{ \overline{R}^{(\nzero+i)}= \infty \right\} &= 1- \frac{k+1}{\sqrt{\nzero}} 
\end{align}
and $\overline{\lambda}^{\left(
    \nzero+\ell\right)}$ is distributed according to Plancherel measure
$\Plancherel_{\nzero+\ell}$;
in particular the random variables ${\lambda}^{\left(
    \nzero+\ell\right)}$ and $\overline{\lambda}^{\left(
    \nzero+\ell\right)}$ have the same distribution.

\medskip

Heuristically, the following result states that when Plancherel growth process
is in an~advanced stage and we observe a relatively small number of its
additional steps, the growths of the bottom rows occur approximately like
independent random variables. Additionally, these growths do not affect too
much the final shape of the Young diagram.

\begin{theorem}
\label{thm:independent-explicit} 

With the above notations, for each fixed $k\in\N_0$ the total variation
distance between $V^{(\nzero)}$ and $\overline{V}^{(\nzero)}$ converges to
zero, as $\nzero\to\infty$; more specifically
\begin{equation}\label{eq:tvd}
     \delta\left(V^{(\nzero)}, \overline{V}^{(\nzero)}\right) =
             o\left(\frac{\ell}{\sqrt{\nzero}}\right). 
\end{equation}
\end{theorem}
    
The proof is postponed to \cref{proof-of-thm:independent-explicit}; in the
forthcoming
\crefrange{sec:proof-of-independent-explicit-begins}{sec:proof-of-independent-explicit-ends} 
we will gather the tools which are necessary for this goal. In fact, we conjecture that the following stronger result is true.
\begin{conjecture}
    \label{conj:optimal}
The distance \eqref{eq:tvd} is equal to
\[ O\left( \frac{\ell}{m^{\frac{5}{6}}} \right). \]      
\end{conjecture}
We will discuss this stronger result in \cref{sec:optimal}.

\subsection{Asymptotics of growth of a given row}
\label{sec:proof-of-independent-explicit-begins}
\label{sec:VK-inspiration-start}

Our main result in this subsection is \cref{prop:asymptotic-probability} which
gives asymptotics of the probability of a~growth of a~given row in the Plancherel
growth process. This result is not new; it was proved by Okounkov \cite[Proposition 2]{Okounkov2000}. Nevertheless we provide an alternative (hopefully simpler) proof below. 
As a preparation, we start with some auxiliary lemmas.

\medskip

\newcommand{\Row}{K}

In the following we keep the notations from the beginning of \cref{sec:plancherel-growth-process-independent}; in particular 
 $\lambda^{(0)}\nearrow \lambda^{(1)} \nearrow \cdots$ is the Plancherel growth process.
Let $\Row\in\N_0$ be fixed. For $n\geq 1$ we define
\begin{equation}
    \label{eq:defsn}
     s^{(n)}_K = \sum_{0\leq r \leq \Row} \PP\left( E_r^{(n)} \right),
\end{equation}
i.e., $s_K^{(n)}$ is defined as the probability that the unique box of
the skew diagram $\lambda^{(n)}/\lambda^{(n-1)}$ is located in one of the rows $0,1,\dots,\Row$.

\begin{lemma}
    \label{lem:decreasing}
    For each $\Row\in\N_0$ the sequence $s^{(1)}_\Row, s^{(2)}_\Row, \dots$ is weakly decreasing.
\end{lemma}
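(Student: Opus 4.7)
My plan is to condition on the pair $(\mu,\nu):=(\lambda^{(n-1)},\lambda^{(n+1)})$ and exploit a uniformity of the intermediate diagram $\lambda^{(n)}$. The one-step Plancherel transition is $\PPcond{\lambda^{(n)}=\lambda}{\lambda^{(n-1)}=\mu}=\frac{\dim\lambda}{n\dim\mu}$ for $\mu\nearrow\lambda$, and multiplying two consecutive transitions gives
\[
\PPcond{\lambda^{(n)}=\lambda,\ \lambda^{(n+1)}=\nu}{\lambda^{(n-1)}=\mu}=\frac{\dim\nu}{n(n+1)\dim\mu},
\]
which is independent of the intermediate $\lambda$. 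Consequently, conditionally on $(\mu,\nu)$, the random $\lambda^{(n)}$ is uniformly distributed over those Young diagrams $\lambda$ satisfying $\mu\nearrow\lambda\nearrow\nu$. I would use this to compare $s_n=\PP(A_n)$ with $s_{n+1}=\PP(A_{n+1})$, where $A_m$ denotes the event $\{R^{(m)}\in\{0,\dots,\Row\}\}$.

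The skew shape $\nu\setminus\mu$ consists of exactly two cells, and their relative position falls into one of three types. If the two cells lie in different rows \emph{and} different columns (rows $r_1\neq r_2$), then both orderings of their addition are valid chains in the Young graph, each contributing conditional probability $\tfrac12$, and $\PPcond{A_n}{\mu,\nu}=\tfrac12(\ind[r_1\le\Row]+\ind[r_2\le\Row])=\PPcond{A_{n+1}}{\mu,\nu}$. If the two cells share a row, only the left-first order is a valid chain, but the cells then have the same row index and once again $\PPcond{A_n}{\mu,\nu}=\PPcond{A_{n+1}}{\mu,\nu}$. If instead the two cells share a column, they must occupy consecutive rows $r$ and $r+1$, and the requirement that row lengths weakly decrease forces the lower cell to be added first; therefore
\[
\PPcond{A_n}{\mu,\nu}=\ind[r\le\Row]\;\ge\;\ind[r+1\le\Row]=\PPcond{A_{n+1}}{\mu,\nu}.
\]

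Averaging the resulting pointwise inequality against the joint law of $(\mu,\nu)$ then yields $s_n\ge s_{n+1}$, as required. The only conceptual step is the opening identity on the conditional uniformity of $\lambda^{(n)}$ given its neighbours; after that, the proof reduces to a short combinatorial case analysis on a two-cell skew shape. The step I would have to verify with the most care is the ``same-column'' case, namely that among the two a priori candidates for the intermediate diagram only the one placing the lower cell first is actually a valid Young diagram; this is immediate from the definition, but it is the only place where one ordering is genuinely forced, and it is precisely what generates the inequality $s_n\ge s_{n+1}$ (rather than equality).
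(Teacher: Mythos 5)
Your proof is correct, but it takes a genuinely different route from the paper's. The paper invokes the jeu-de-taquin-trajectory coupling of Romik and \'Sniady (the ``lazy parametrization''), which realizes the newly added box $\mathbf{d}_n$ as a term $\mathbf{q}_n$ in a single random sequence whose row index is almost surely weakly increasing in $n$; the monotonicity of $s_n$ then drops out immediately from that pathwise coupling, with no combinatorial case analysis. Your argument instead conditions on the outer pair $(\lambda^{(n-1)},\lambda^{(n+1)})=(\mu,\nu)$, uses the product formula $\dim\nu/\bigl(n(n+1)\dim\mu\bigr)$ to conclude that the intermediate diagram is conditionally uniform over the admissible interpolants, and then classifies the two-cell skew shape $\nu\setminus\mu$ into three types (disconnected, horizontal domino, vertical domino), observing that only the vertical-domino case forces an order and that it does so in the favorable direction. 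Both arguments are sound. Yours is entirely self-contained, relying only on the Markov structure of the Plancherel growth process and elementary lattice combinatorics, whereas the paper's leans on the nontrivial Romik--\'Sniady equidistribution result but, once that is granted, is shorter and delivers something strictly stronger: an explicit monotone pathwise coupling of the row indices across all steps, rather than monotonicity of the one-dimensional marginals alone. You also correctly flagged the vertical-domino subcase as the crux; your justification that the lower cell must be inserted first for the intermediate shape to remain a Young diagram is accurate, and that is indeed the unique source of the inequality $s_n\ge s_{n+1}$.
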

\begin{proof}
    For $n\geq 1$ let $\mathbf{d}_n$ denote the unique box of
the skew diagram $\lambda^{(n)}/\lambda^{(n-1)}$. Then $s_K^{(n)}$ is equal to the
probability that the box $\mathbf{d}_n$ is located in one of the rows
$0,1,\dots,\Row$.

Romik and \Sniady \cite[Section 3.3]{RomikSniady-AnnPro} constructed a random sequence of
boxes $\mathbf{q}_1, \mathbf{q}_2, \ldots$ (which is ``\emph{the jeu de taquin
    trajectory in the lazy parametrization}'') such that for each $n\geq 1$ we have
equality of distributions \cite[Lemma 3.4]{RomikSniady-AnnPro}
\[ \mathbf{d}_n \overset{d}{=} \mathbf{q}_n \]
and, furthermore, each box $\mathbf{q}_{n+1}$ is obtained from the previous one
$\mathbf{q}_{n}$ by moving one node to the right, or node up, or by staying
put. In this way
\[ \left( \text{the number of the row of $\mathbf{q}_{n}$}\right)_{n\geq 1} \]
is a weakly increasing sequence of random variables. It follows that 
the~corresponding cumulative distribution functions evaluated in point $\Row$
\begin{multline*} s^{(n)}_\Row = 
\PP\big\{  \text{(the number of the row of $\mathbf{d}_{n}$)} \leq \Row
\big\}
= \\
\PP\big\{  \text{(the number of the row of $\mathbf{q}_{n}$)} \leq \Row
\big\}
\end{multline*}
form a weakly decreasing sequence, which completes the proof. 
\end{proof}

\begin{lemma}
    \label{lem:s-bound}
For each $\Row\in \N_0$ and $n\geq 1$
\[ s^{(n)}_\Row \leq \frac{\Row+1}{\sqrt{n}}. \]
\end{lemma}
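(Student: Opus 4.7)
The plan is to combine the monotonicity of $(s_m)$ established in \cref{lem:decreasing} with a simple telescoping identity. Since the Plancherel growth process adds exactly one box at each step, the length of row $r$ grows by $1$ at step $m$ precisely on the event $E_r^{(m)}$, which gives $\PP(E_r^{(m)}) = \E[\lambda^{(m)}_r] - \E[\lambda^{(m-1)}_r]$. Summing over $m$ telescopes and then summing over $r \in \{0, \dots, K\}$ yields
\[
\sum_{m=1}^n s_m \;=\; \sum_{r=0}^K \E\bigl[\lambda^{(n)}_r\bigr].
\]

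By \cref{lem:decreasing}, $s_m \geq s_n$ for every $m \leq n$, so $n\, s_n \leq \sum_{m=1}^n s_m$, and hence
\[
s_n \;\leq\; \frac{1}{n}\sum_{r=0}^K \E\bigl[\lambda^{(n)}_r\bigr].
\]
It therefore suffices to establish the expected-row-length estimate $\sum_{r=0}^K \E[\lambda^{(n)}_r] \leq (K+1)\sqrt{n}$ in order to conclude $s_n \leq (K+1)/\sqrt{n}$.

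The main obstacle is obtaining this expected-row-length bound with the sharp constant. For small $n$, specifically $n \leq (K+1)^2$, the conclusion is automatic from the trivial inequality $s_n \leq 1$, so the work lies in the regime $n \geq (K+1)^2$. There the sum $\sum_{r=0}^K \E[\lambda^{(n)}_r]$ must be controlled to order $\sqrt{n}$ with a precise constant: a naive bound of the form $\E[\lambda^{(n)}_r] \leq C\sqrt{n}$ only produces $s_n \leq C(K+1)/\sqrt{n}$, so one needs a genuinely sharp estimate. This is achieved via a Vershik--Kerov-style analysis using the hook-length formula and character-ratio inequalities, drawing on precisely the toolkit that the authors cite from \cite{Vershik1985a} as the main inspiration for their proofs.
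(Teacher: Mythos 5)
Your reduction has a fatal flaw: the ``expected-row-length estimate'' that you reduce to, namely $\sum_{r=0}^K \E[\lambda^{(n)}_r] \leq (K+1)\sqrt{n}$, is \emph{false}. Already for $K=0$ and $n=2$ one has $\E[\lambda_0^{(2)}] = \tfrac{3}{2} > \sqrt{2}$, and asymptotically $\E[\lambda^{(n)}_r] \sim 2\sqrt{n}$ for every fixed $r$ (this is exactly what \cref{lem:liminf} captures as a lower bound, and it is the Logan--Shepp/Vershik--Kerov constant). So no amount of hook-length or character-ratio machinery can deliver the estimate you need, because it is simply not true.

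The reason your approach cannot be patched is structural: the step $n\,s_n \leq \sum_{m=1}^n s_m$, which uses \cref{lem:decreasing}, is quantitatively lossy. Since $s_m \approx (K+1)/\sqrt{m}$, the partial sums behave like $\sum_{m=1}^n s_m \approx 2(K+1)\sqrt{n}$, so the average $\tfrac{1}{n}\sum_{m=1}^n s_m$ is roughly \emph{twice} $s_n$. Thus even the true sharp asymptotics of $\E[\lambda^{(n)}_r]$ would only give you $s_n \lesssim 2(K+1)/\sqrt{n}$, off by a factor of $2$. To get the constant $1$ you must bound $\PP(E_r^{(n)})$ directly rather than through the telescoped average. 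The paper does exactly this: it observes that $\sqrt{n}\,\PP(E_r^{(n)})$ is an inner product $\langle X, Y\rangle$ of two unit-length (or sub-unit) vectors built from tableau dimensions $d_\mu$, and applies Cauchy--Schwarz pointwise in $n$, giving $\PP(E_r^{(n)}) \leq 1/\sqrt{n}$ for each $r$; summing over $r\in\{0,\dots,K\}$ then yields the lemma with the correct constant. Note, incidentally, that the paper uses monotonicity plus the telescoping identity in the opposite direction --- in \cref{lem:liminf}, to establish a \emph{lower} bound on $\liminf (s_1+\cdots+s_n)/\sqrt{n}$ --- precisely because those tools naturally control the averaged quantity, not the last term.
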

\begin{proof}
The monograph of Romik \cite[Section 1.19]{Romik2015a} contains the proof
(which is based on the work of Vershik and Kerov 
\cite[Section~3, Lemma~6]{Vershik1985a,Vershik1985})
of the inequality
\begin{equation}
\label{eq:upper-inequality-growth} 
\PP\left( E^{(n)}_r \right) \leq \frac{1}{\sqrt{n}} 
\end{equation}
in the special case of the bottom row $r=0$. After some minor adjustments this proof is applicable
to the general case of $r\in\N_0$
(in fact, these adjustments are explicitly explained in the proof of Eq.~\eqref{eq:VK-upperbound} later on).
The summation over
$r\in\{0,\dots,\Row\}$ concludes the proof.
\end{proof}

\begin{lemma} 
    \label{lem:liminf}
    For each $\Row\in\N_0$
\[ \liminf_{n\to\infty} \frac{s^{(1)}_\Row+\cdots+s^{(n)}_\Row}{\sqrt{n}} \geq 2(\Row+1). \]
\end{lemma}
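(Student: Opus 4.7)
The plan is to recognize the quantity $s_1+\cdots+s_n$ as the expected total number of boxes in the bottom $\Row+1$ rows of a Plancherel-random Young diagram $\lambda^{(n)}$, and then to invoke the Vershik--Kerov--Logan--Shepp (VKLS) limit shape theorem one row at a time.

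First I would observe that, for each fixed $r\in\N_0$, the row length $\lambda^{(n)}_r$ equals the number of growth steps among the first $n$ that landed in that row, so $\lambda^{(n)}_r = \sum_{i=1}^{n} \ind_{E^{(i)}_r}$. Taking expectations and exchanging the order of summation gives
\[
s_1+\cdots+s_n \;=\; \sum_{r=0}^{\Row}\sum_{i=1}^{n} \PP\!\left(E^{(i)}_r\right) \;=\; \sum_{r=0}^{\Row}\E\!\left[\lambda^{(n)}_r\right],
\]
so it is enough to prove that $\liminf_{n\to\infty}\E\!\left[\lambda^{(n)}_r\right]/\sqrt{n}\geq 2$ for each fixed $r\in\{0,\dots,\Row\}$.

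For this I would appeal to the VKLS limit shape theorem (see e.g.\ \cite{Romik2015a}): the rescaled diagram $\lambda^{(n)}/\sqrt{n}$ converges to a deterministic limit curve whose French-coordinate boundary meets the $x$-axis at $x=2$, so for each fixed $r$ the scaled length $\lambda^{(n)}_r/\sqrt{n}$ converges in probability to $2$ (the row in question sits at the vanishing rescaled height $r/\sqrt{n}$). Since these variables are non-negative, Fatou's lemma---applied along an almost surely convergent subsequence---converts this into the bound $\liminf_n\E\!\left[\lambda^{(n)}_r\right]/\sqrt{n}\geq 2$; summing the $\Row+1$ resulting inequalities yields the claim. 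The one genuinely delicate ingredient is the row-by-row convergence $\lambda^{(n)}_r/\sqrt{n}\to 2$ for each fixed $r\leq \Row$, but this is entirely subsumed by the classical VKLS result, so this step reduces to locating the correct formulation in the literature; the remaining manipulations---exchanging two finite sums and applying Fatou---are routine.
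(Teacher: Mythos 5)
Your proposal is correct and follows essentially the same route as the paper: rewrite $s_1+\cdots+s_n$ as $\E\lambda^{(n)}_0+\cdots+\E\lambda^{(n)}_K$ (with $K$ the fixed row cutoff) and then invoke the Logan--Shepp/Vershik--Kerov limit-shape results separately for each fixed row. The only cosmetic difference is that you derive the expectation bound from convergence in probability plus a Fatou-type argument, whereas the paper cites the inequality $\liminf_n \E\lambda^{(n)}_r/\sqrt{n}\geq 2$ directly, remarking that the known proof for $r=0$ carries over verbatim to any fixed $r$.
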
    
\begin{proof}
We write $\lambda^{(n)}=(\lambda^{(n)}_0, \lambda^{(n)}_1, \dots) $ so that $\lambda^{(n)}_r$ is the length of the appropriate row of the Young diagram $\lambda^{(n)}$.
The work of Logan and Shepp
\cite{Logan1977} as well as the work of Vershik and Kerov \cite{Versik1977}
contains the proof that for each $\epsilon>0$
\begin{equation}
    \label{eq:fixedrow}
     \lim_{n\to\infty} \PP\left\{ \frac{\lambda^{(n)}_r}{\sqrt{n}} < 2-\epsilon \right\} = 0
\end{equation}
in the special case of the bottom row $r=0$.
 We will revisit this proof and
explain how to adjust it for the general case $r\in\N_0$. 

With the notations of Romik \cite[the proof of Theorem 1.23]{Romik2015a}, if
\eqref{eq:fixedrow} were not true for some $\epsilon>0$ and $r\in\N_0$, then for infinitely many
values of $n$ the corresponding function $\psi_n$ (which encodes the Young diagram $\lambda^{(n)}$
in \emph{the Russian coordinate system} \cite[Section~1.17]{Romik2015a}) would be
bounded from above by the shifted absolute value function
$|u|+ r \sqrt{\frac{2}{n}}$ on the interval $\left[ \sqrt{2}-
\frac{\epsilon}{\sqrt{2}}, \sqrt{2} \right]$. Clearly this would prevent $\psi_n$
from converging uniformly to the limit shape in contradiction to
Logan--Shepp--Vershik--Kerov limit theorem \cite[Theorem~1.22]{Romik2015a}.

\medskip

Equation~\eqref{eq:fixedrow} implies that for each $r\in\N_0$
\begin{equation}
    \label{eq:liminf}
     \liminf_{n\to\infty} \frac{\E \lambda^{(n)}_r}{\sqrt{n}} \geq 2. 
\end{equation}

\medskip

We revisit the ideas of Vershik and Kerov 
\cite[Section~3, Lemma~6]{Vershik1985a,Vershik1985}, see also
\cite[Section 1.19]{Romik2015a}.
Since the indicator of the event $E^{(m)}$ fulfills 
\[ \ind_{E_r^{(m)}} = \lambda^{(m)}_r - \lambda^{(m-1)}_r \]
it follows that 
\[ \mathbb{P}\left( E_r^{(m)} \right)= \E \ind_{E_r^{(m)}} = \E \lambda^{(m)}_r - \E \lambda^{(m-1)}_r.\]
By summing over $1\leq m\leq n$ and over $0\leq r \leq K$ it follows that
\[ s^{(1)}_\Row+\cdots+s^{(n)}_\Row = \E \lambda^{(n)}_0 + \cdots + \E \lambda^{(n)}_\Row.\]
Application of \eqref{eq:liminf} completes the proof.
\end{proof}

\begin{lemma}
    \label{lem:skasymptotics}
    For each $K\in \N_0$
    \begin{equation}
     \label{eq:s-good} 
    \lim_{n\to\infty} \sqrt{n}\; s^{(n)}_\Row =\Row+1 .
    \end{equation}
\end{lemma}
\begin{proof}    
    We will use a simplified notation and write $s^{(i)} = s^{(i)}_\Row$.

    \medskip
    
    The upper bound for the left-hand side is a consequence of \cref{lem:s-bound}.
    
     \medskip

For the lower bound, suppose \emph{a contrario} that for some $\epsilon>0$ there
exist infinitely many values of an integer $n\geq 1$ for which
\begin{equation} 
\label{eq:acontrario}
\sqrt{n}\; s^{(n)} < (1-\epsilon) (\Row+1). 
\end{equation}
Let $C>0$ be a number which will be fixed later in the proof and set $m=n+\lfloor C n\rfloor$.
\cref{lem:decreasing,lem:s-bound} imply that
\begin{multline*} 
\frac{s^{(1)}+\cdots+s^{(m)}}{(\Row+1) \sqrt{n}} \leq 
\frac{\left(s^{(1)}+\cdots+s^{(n)}\right) +  (m-n)s^{(n)}}{(\Row+1) \sqrt{n}} < \\
\frac{1}{\sqrt{n}} \left( \frac{1}{\sqrt{1}}+\cdots+\frac{1}{\sqrt{n}}\right) + 
                                            (1-\epsilon)  \frac{m-n}{n}.
\end{multline*}
On the right-hand side we may bound 
the sum $\frac{1}{\sqrt{1}}+\cdots+\frac{1}{\sqrt{n}}$
by the corresponding integral, thus 
\begin{multline*} 
    \sqrt{\frac{m}{n}} \cdot \frac{s^{(1)}+\cdots+s^{(m)}}{(\Row+1) \sqrt{m}}  =    
    \frac{s^{(1)}+\cdots+s^{(m)}}{(\Row+1) \sqrt{n}} < \\
\frac{1}{\sqrt{n}}
\int_0^{n} \frac{1}{\sqrt{x}} \dif x  + (1-\epsilon)  \frac{m-n}{n} 
=  2 + (1-\epsilon)  \frac{m-n}{n}.
\end{multline*}
Passing to the limit $n \to \infty$ for the values of $n$ 
for which \eqref{eq:acontrario} holds true, 
\cref{lem:liminf} and the~above inequality 
imply that
\[
    2 \sqrt{1+C} \leq
2 + (1-\epsilon)  C. 
\]    
However, the above inequality is \emph{not} fulfilled for any 
\[ 0< C < \frac{4 \epsilon}{(1-\epsilon)^2} \]
which completes the proof \emph{a contrario}.
\end{proof}

The following result is due to Okounkov \cite[Proposition 2]{Okounkov2000}. 
We~provide an alternative, hopefully simpler proof below.
\begin{proposition}
    \label{prop:asymptotic-probability}
    For each $i\in \N_0$
    \[ 
    \lim_{n\to\infty} \sqrt{n}\ \PP\left(  E_i^{(n)} \right) =1 .\]    
\end{proposition}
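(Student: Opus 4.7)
The plan is to first establish the cumulative asymptotic $\sqrt{n}\,s_n \to K+1$ (where $s_n$ depends implicitly on the parameter $K$) and then extract the statement about a single row by taking a difference in $K$.

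\textbf{Step 1: pin down $T_n := s_1+\cdots+s_n$.} From \cref{lem:s-bound} combined with the elementary estimate $\sum_{i=1}^{n} i^{-1/2} = 2\sqrt{n}+O(1)$ we immediately obtain
\[ \limsup_{n\to\infty} \frac{T_n}{\sqrt{n}} \leq 2(K+1). \]
Together with \cref{lem:liminf} this yields $\lim_{n\to\infty} T_n/\sqrt{n} = 2(K+1)$.

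\textbf{Step 2: a Tauberian step from monotonicity.} For any real $\alpha>1$, \cref{lem:decreasing} gives $s_i \leq s_n$ whenever $i>n$, hence
\[ T_{\lfloor\alpha n\rfloor} - T_n \;\leq\; \bigl(\lfloor\alpha n\rfloor - n\bigr)\, s_n, \]
which rearranges to $\sqrt{n}\,s_n \geq \dfrac{T_{\lfloor\alpha n\rfloor}/\sqrt{n} - T_n/\sqrt{n}}{\lfloor\alpha n\rfloor/n - 1}$. Passing to the $\liminf$ and plugging in Step~1,
\[ \liminf_{n\to\infty} \sqrt{n}\,s_n \;\geq\; \frac{2(K+1)\bigl(\sqrt{\alpha}-1\bigr)}{\alpha-1} \;=\; \frac{2(K+1)}{\sqrt{\alpha}+1}. \]
Letting $\alpha \to 1^+$ gives $\liminf \sqrt{n}\,s_n \geq K+1$, while the matching bound $\limsup \sqrt{n}\,s_n \leq K+1$ is immediate from \cref{lem:s-bound}. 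Therefore $\sqrt{n}\,s_n \to K+1$ for every fixed $K\in\N_0$.

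\textbf{Step 3: differencing in $K$.} Temporarily write $s_n^{(K)}$ for $s_n$ to emphasize the dependence on the upper summation index. By the definition of $s_n$,
\[ \PP\bigl(E_r^{(n)}\bigr) \;=\; s_n^{(r)} - s_n^{(r-1)}, \]
with the convention $s_n^{(-1)} := 0$ when $r=0$. By Step~2 applied separately to $K=r$ and $K=r-1$,
\[ \sqrt{n}\,\PP\bigl(E_r^{(n)}\bigr) \;=\; \sqrt{n}\,s_n^{(r)} - \sqrt{n}\,s_n^{(r-1)} \;\longrightarrow\; (r+1)-r \;=\; 1, \]
which is the claim.

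\textbf{Main obstacle.} The only delicate point is the Tauberian passage in Step~2: the two Vershik--Kerov--type inputs (\cref{lem:s-bound} and \cref{lem:liminf}) give matching bounds only on the \emph{averaged} sum $T_n/\sqrt{n}$, and converting this averaged information into pointwise convergence of $\sqrt{n}\,s_n$ genuinely requires the monotonicity from \cref{lem:decreasing}. Once one has the cumulative asymptotic, the extraction of the asymptotic for an individual row by differencing is purely bookkeeping.
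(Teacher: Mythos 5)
Your proof is correct and rests on exactly the same three ingredients as the paper's: the monotonicity of $s_n$ (\cref{lem:decreasing}), the pointwise upper bound (\cref{lem:s-bound}), and the cumulative $\liminf$ (\cref{lem:liminf}), followed by the same differencing in $K$. The only difference is presentational: where the paper proves $\liminf\sqrt{n}\,s_n\geq K+1$ by contradiction (assuming $\sqrt{n}\,s_n<(1-\epsilon)(K+1)$ infinitely often, introducing an auxiliary $C>0$, and deriving the impossible inequality $2\sqrt{1+C}\leq 2+(1-\epsilon)C$), you give the direct Tauberian computation $\liminf\sqrt{n}\,s_n\geq 2(K+1)/(\sqrt{\alpha}+1)$ and let $\alpha\to 1^+$. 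Your Step 2 is arguably cleaner and makes the Tauberian structure more transparent, but it is the same underlying argument.
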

\begin{proof}
    From \eqref{eq:defsn} it follows that
    \[  \sqrt{n}\ \PP\left( E_i^{(n)} \right) = 
       \sqrt{n} \left( s^{(n)}_{i} - s^{(n)}_{i-1} \right) = 
       \sqrt{n}\ s^{(n)}_{i} - \sqrt{n}\ s^{(n)}_{i-1}. \]
    To each of the two summands on the right-hand side 
		we apply \cref{lem:skasymptotics} which completes the proof.
\end{proof}

\subsection{What happens after just one step?}
\label{sec:VK-inspiration-end}

We will prove \cref{lem:single-step-plancherel-conditional} and
\cref{lem:single-step-plancherel-conditional-c} which show that the (rough)
information about the number of the row in which the growth of a Young diagram
occurred does not influence too much the probability distribution of the
resulting Young diagram.

\begin{lemma}
    \label{lem:single-step-plancherel-conditional}
    For each $r\in\N_0$
    the total variation distance between:
    \begin{itemize}
        
        \item the probability distribution of $\lambda^{(n)}$
        (i.e.,~the Plancherel measure on the~set~$\diagrams_n$), and        
        \item the
        \emph{conditional} probability distribution of $\lambda^{(n)}$ \emph{under the condition}
        that the event $E^{(n)}_r$ occurred,
        
    \end{itemize}
    converges to zero, as $n \to\infty$.
\end{lemma}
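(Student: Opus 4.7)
The plan is to apply Bayes' rule to express the conditional distribution of $\lambda^{(n)}$ given $E_r^{(n)}$ as a reweighting of $\Plancherel_n$ by an explicit likelihood ratio, and then control the total variation distance via a second--moment / Cauchy--Schwarz argument whose only inputs are the \emph{a priori} inequality~\eqref{eq:upper-inequality-growth} and \cref{prop:asymptotic-probability}.

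First, I would let $g_n(\lambda) := \PPcond{E_r^{(n)}}{\lambda^{(n)} = \lambda}$. By the classical backward transition of the Plancherel growth process, this has the explicit form
\[
    g_n(\lambda) = \frac{\dim(\lambda - \Box_r)}{\dim \lambda}\, \ind_{\{\lambda \text{ has a corner in row } r\}},
\]
where $\dim \nu$ denotes the number of standard Young tableaux of shape $\nu$ and $\Box_r = (\lambda_r - 1, r)$. Since $\E g_n = \PP(E_r^{(n)})$ by the tower property, Bayes' rule gives $\PPcond{\lambda^{(n)} = \lambda}{E_r^{(n)}} = \Plancherel_n(\lambda)\, g_n(\lambda) / \E g_n$, and hence the total variation distance in question equals
\[
    \delta = \frac{1}{2\E g_n}\, \E\!\left|g_n(\lambda^{(n)}) - \E g_n\right| \;\leq\; \frac{\sqrt{\Var g_n(\lambda^{(n)})}}{2\E g_n}
\]
by Cauchy--Schwarz.

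Next, the key calculation will be an exact evaluation of $\E[g_n(\lambda^{(n)})^2]$. After substituting $\Plancherel_n(\lambda) = (\dim \lambda)^2/n!$ and making the change of variable $\mu := \lambda - \Box_r$, which is a bijection between $\lambda\in\diagrams_n$ with a corner in row $r$ and $\mu\in\diagrams_{n-1}$ whose row $r$ is extendable, the dimension factors telescope cleanly and leave
\[
    \E\!\left[g_n(\lambda^{(n)})^2\right] = \frac{1}{n}\, \PP\!\bigl(\text{row } r \text{ of } \lambda^{(n-1)} \text{ is extendable}\bigr) \;\leq\; \frac{1}{n}.
\]
Setting $\alpha_n := \sqrt{n}\, \PP(E_r^{(n)})$, we have $\alpha_n \leq 1$ by~\eqref{eq:upper-inequality-growth} and $\alpha_n \to 1$ by \cref{prop:asymptotic-probability}. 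Therefore $\Var g_n \leq (1 - \alpha_n^2)/n$, and plugging into the Cauchy--Schwarz bound yields $\delta \leq \sqrt{1 - \alpha_n^2}\,/\,(2\alpha_n) \to 0$, as required.

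The main subtlety to watch out for is the extendability probability $\PP(\text{row } r \text{ extendable in } \lambda^{(n-1)})$; proving \emph{directly} that it tends to $1$ for $r\geq 1$ would seemingly require Baik--Deift--Johansson-style asymptotics for the spacings $\lambda^{(n-1)}_{r-1} - \lambda^{(n-1)}_r$. The elegant feature of the above approach is that this extendability comes for free: the mere non-negativity of $\Var g_n$ forces this probability to be at least $\alpha_n^2$, hence to tend to $1$ as a byproduct of \cref{prop:asymptotic-probability}. No concentration input beyond \cref{prop:asymptotic-probability} and the inequality~\eqref{eq:upper-inequality-growth} is needed.
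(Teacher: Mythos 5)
Your proposal is correct and is, up to repackaging, the same argument as the paper's. Where you compute with the likelihood ratio $g_n(\lambda)=d_{\lambda-\Box_r}/d_\lambda$, the paper encodes exactly the same data in the vectors $X_\mu \propto d_{\operatorname{del}_r\mu}$ and $Y_\mu \propto d_\mu$; one has $X_\mu = \sqrt{n}\,g_n(\mu)\,Y_\mu$, so your $\E g_n = \langle X,Y\rangle_{\diagrams_n}/\sqrt{n}$, your $\E[g_n^2]=\|X\|_{\diagrams_n}^2/n$, your bound $\E[g_n^2]\le 1/n$ is their $\|X\|\le 1$, and your Cauchy--Schwarz step $\E|g_n-\E g_n|\le\sqrt{\Var g_n}$ is their $|\langle c_n^{-1}X-Y,Y\rangle_A|\le\|c_n^{-1}X-Y\|\,\|Y\|$; both proofs then close via \cref{prop:asymptotic-probability}.
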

In \cref{sec:optimal} we will discuss the conjectural rate of convergence in this result.
\begin{proof}    
    For a Young diagram $\mu=(\mu_0,\mu_1,\dots)$ and $r\in \N_0$ we denote by
    \[ \operatorname{del}_r \mu= (\mu_0,\dots,\mu_{r-1},\mu_r-1,\mu_{r+1}, \dots) \]
    the Young diagram obtained from $\mu$ by removing a single box from the~row
    with the index $r$. The Young diagram $\operatorname{del}_r \mu$ is
    well-defined only if 
   \mbox{$\mu_r> \mu_{r+1}$}.
    
    \medskip
    
    We consider the finite-dimensional vector space of real-valued functions on the set $\diagrams_{n}$ of
    Young diagrams with $n$ boxes. For any subset $A\subseteq\diagrams_{n}$ 
		we~consider the~non-negative bilinear form on this space
    \[ \langle f, g\rangle_A = \sum_{\mu\in A} f_\mu\ g_\mu \]
    and the corresponding seminorm
    \[ \| f \|_A := \sqrt{ \langle f,f\rangle_A }.\]
    An important special case is $A=\diagrams_{n}$ with the corresponding norm
$\|\cdot\|_{\diagrams_{n}}$.

    We consider two special vectors $X,Y$ in this space:
    \begin{align*}
    X_\mu & := 
    \begin{cases} 
        \frac{1}{\sqrt{(n-1)!}}\ d_{\operatorname{del}_r \mu}, 
                              &  \text{if ${\operatorname{del}_r \mu}$ is well-defined}, \\
         0 & \text{otherwise,} 
     \end{cases} 
        \\
    Y_\mu & := \frac{d_{\mu}}{\sqrt{n!}}
    \end{align*} 
   where $d_\mu$ denotes the number of standard Young tableaux of shape $\mu$.
	Observe that the vector $X_\mu$ depends on $r$.
An~important feature of these vectors is that for any set $A\subseteq
\diagrams_{n}$
   \begin{align*}
\|Y\|_{A}^2 &=   \PP\left\{ \lambda^{(n)} \in A \right\},     \\
\left\langle X,Y\right\rangle_{A} &=  
               \sqrt{n}\ \PP\left\{ \lambda^{(n)} \in A  \text{ and } E^{(n)}_r \right\}, \\
  \|X\|_{A}^2 &=      \PP\left\{ \lambda^{(n-1)} \in \operatorname{del}_r A\ \right\},                      
   \end{align*}
see \cite[Section~1.19, Proof of Lemma~1.25]{Romik2015a}.

In particular, for the special case $A=\diagrams_n$ 
\begin{equation}
    \label{eq:VK-upperbound}
        c_n:= \sqrt{n}\; \PP\left( E^{(n)}_r\right)=
      \langle X, Y \rangle_{\diagrams_{n}} \leq
       \|X\|_{\diagrams_{n}}  \cdot \|Y\|_{\diagrams_{n}} \leq 1.
\end{equation}
    By \cref{prop:asymptotic-probability} the left-hand side converges to $1$
as $n\to\infty$. 
Since $\|X\|_{\diagrams_{n}} \leq 1$ and $\|Y\|_{\diagrams_{n}} = 1$,
it follows that
    \[ \lim_{n\to\infty} c_n = \lim_{n\to\infty} \langle X, Y \rangle_{\diagrams_{n}}=
       \lim_{n\to\infty} \|X\|_{\diagrams_{n}} =
        1.\]
As a consequence, a simple calculation using bilinearity of the scalar product shows that     
    \[ \lim_{n\to\infty} \left\| c_n^{-1}  X-Y \right\|_{\diagrams_{n}} = 0. \]
    
For any $A\subseteq \diagrams_{n}$ it follows therefore that
\begin{multline*} \left| \PPcondCurly{ \lambda^{(n)} \in A }{ E^{(n)}_r } -
\PP\left\{ \lambda^{(n)} \in A  \right\} \right| = \\ =
\left|  \frac{\left\langle X,Y \right\rangle_A}{\sqrt{n}\ \PP\left( E^{(n)}_r\right)} - \left\langle Y,Y \right\rangle_A \right|
= 
\left| \left\langle c_n^{-1}   X-Y, Y\right\rangle_A  \right| \leq \\
\left\|  c_n^{-1}   X-Y \right\|_{\diagrams_{n}} \cdot
\| Y\|_{\diagrams_{n}}.
\end{multline*}
The right-hand side does not depend on the choice of $A$ and converges to zero which concludes the proof.
\end{proof}

\renewcommand{\Row}{k}

\begin{lemma}
    \label{lem:single-step-plancherel-conditional-c}
    For each $\Row\in\N_0$
    the total variation distance between:
    \begin{itemize}
        
        \item the probability distribution of $\lambda^{(n)}$
        (i.e.,~the Plancherel measure on the~set~$\diagrams_n$), and        
        \item the
        \emph{conditional} probability distribution of $\lambda^{(n)}$ \emph{under the condition}
        that the event $\left( E^{(n)}_0 \cup \cdots \cup E^{(n)}_\Row \right)^c$ occurred,
        
    \end{itemize}
    is of order $o\left(\frac{1}{\sqrt{n}}\right)$, as $n \to\infty$.
\end{lemma}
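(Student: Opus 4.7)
My plan is to exploit the fact that the events $E^{(n)}_0, E^{(n)}_1, \ldots$ form a partition of the sample space (since the box $\lambda^{(n)}\setminus\lambda^{(n-1)}$ must sit in some row), and then decompose Plancherel measure via total probability and use the previous lemma.

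Write $\mu_n(\cdot)=\PP\{\lambda^{(n)}\in\cdot\}$, $\mu_n^{(r)}(\cdot)=\PPcond{\lambda^{(n)}\in\cdot}{E^{(n)}_r}$, and $\nu_n(\cdot)=\PPcond{\lambda^{(n)}\in\cdot}{(E^{(n)}_0\cup\cdots\cup E^{(n)}_\Row)^c}$. Setting $p_r=\PP(E^{(n)}_r)$ and $s_n=\sum_{r=0}^\Row p_r$, total probability yields, for every $A\subseteq\diagrams_n$,
\[
\mu_n(A) = \sum_{r=0}^\Row p_r\, \mu_n^{(r)}(A) + (1-s_n)\, \nu_n(A).
\]
Subtracting $(1-s_n)\mu_n(A)$ from both sides and dividing by $1-s_n$ gives the key identity
\[
\nu_n(A)-\mu_n(A) = -\frac{1}{1-s_n}\sum_{r=0}^\Row p_r\left[\mu_n^{(r)}(A)-\mu_n(A)\right].
\]

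Now I bound each factor. By \cref{prop:asymptotic-probability}, $p_r = (1+o(1))/\sqrt{n}$ for every fixed $r\in\{0,\dots,\Row\}$; in particular $s_n\to 0$, so $1/(1-s_n)=1+o(1)$. By \cref{lem:single-step-plancherel-conditional}, for each fixed $r$ the total variation distance $\delta_n^{(r)}:=\sup_A |\mu_n^{(r)}(A)-\mu_n(A)|$ tends to zero as $n\to\infty$; since there are only $\Row+1$ values of $r$, the maximum $\varepsilon_n:=\max_{0\leq r\leq \Row}\delta_n^{(r)}$ also tends to zero. Taking the supremum over $A$ in the displayed identity therefore gives
\[
\delta(\mu_n,\nu_n) \leq \frac{1}{1-s_n}\sum_{r=0}^\Row p_r\, \delta_n^{(r)} \leq \frac{(\Row+1)\, \varepsilon_n}{\sqrt{n}\,(1-s_n)}\cdot(1+o(1)) = o\!\left(\frac{1}{\sqrt{n}}\right),
\]
which is exactly the claimed bound.

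The only step that requires care is ensuring that the $1/\sqrt{n}$ prefactor survives the combination with the $o(1)$ factor coming from \cref{lem:single-step-plancherel-conditional}: this is why the lemma is stated as $o(1/\sqrt{n})$ rather than $O(1/\sqrt{n})$. The rate is obtained for free because $\Row+1$ is fixed while $\varepsilon_n\to 0$, so no quantitative refinement of \cref{lem:single-step-plancherel-conditional} is needed. No additional estimates beyond \cref{prop:asymptotic-probability,lem:single-step-plancherel-conditional} are necessary.
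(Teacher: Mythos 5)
Your proof is correct and follows essentially the same route as the paper: a total-probability decomposition over the events $E^{(n)}_0,\dots,E^{(n)}_\Row$ and their complement, replacing each conditional law given $E^{(n)}_r$ by Plancherel measure up to an error controlled by \cref{lem:single-step-plancherel-conditional}, and extracting the $1/\sqrt{n}$ prefactor from \cref{prop:asymptotic-probability}. Your explicit identity for $\nu_n(A)-\mu_n(A)$ is just a cleaner rewriting of the paper's ``$\pm C_n$'' bookkeeping, so no gap remains.
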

\begin{proof}
    For real numbers $x$ and $c>0$ we will denote by $x \pm c$ some unspecified real number
in the interval $[x-c,x+c]$. In the following we will use the quantity
$s_\Row^{(n)}$ defined in \eqref{eq:defsn}.
We denote
\[ F^{(n)} = \left( E^{(n)}_0 \cup \cdots \cup E^{(n)}_\Row \right)^c.\]

    Let $C_n$ be the maximum (over $r\in\{0,\dots,\Row\}$) of the total
variation distance considered in
\cref{lem:single-step-plancherel-conditional}. 
The law of total probability implies that for any 
set~\mbox{$A\subseteq \diagrams_{n}$}
    \begin{multline*} 
    \PP\left\{ \lambda^{(n)} \in A \right\} = \\
        \sum_{0\leq r\leq \Row}  \PPcondCurly{ \lambda^{(n)} \in A }{E_r^{(n)}} \PP\left( E^{(n)}_r \right)
    + \PPcondCurly{ \lambda^{(n)} \in A }{  F^{(n)} }
    \PP\left( F^{(n)}  \right) = \\
    \sum_{0\leq r\leq \Row} \left[ \PP\left\{ \lambda^{(n)} \in A \right\} \pm C_n \right] \PP\left( E^{(n)}_r \right) +
       \PPcondCurly{ \lambda^{(n)} \in A }{  F^{(n)} }
    \PP(F^{(n)})= \\
=
\PP\left\{ \lambda^{(n)} \in A \right\} s^{(n)}_\Row \pm  C_n s^{(n)}_\Row  + \PPcondCurly{ \lambda^{(n)} \in A }{  F^{(n)} }
\left( 1-  s^{(n)}_\Row \right).
    \end{multline*}
By solving the above equation for the conditional probability we get
\[ \PPcondCurly{ \lambda^{(n)} \in A }{  F^{(n)} } =
\frac{ \PP\left\{ \lambda^{(n)} \in A \right\}  \left( 1-s^{(n)}_\Row \right)\pm C_n s_\Row^{(n)} }{1-s^{(n)}_\Row}.
\]
In this way we proved that the total variation distance considered in the statement of the lemma is bounded from above by 
\begin{equation}
    \label{eq:TVD-lolcats}
      \frac{  C_n s_\Row^{(n)} }{1-s^{(n)}_\Row}.
\end{equation}

The asymptotics of the individual factors in \eqref{eq:TVD-lolcats} is provided by
\cref{lem:single-step-plancherel-conditional} (which gives $C_n=o(1)$) and by \cref{lem:skasymptotics} or, equivalently, by Okounkov's result
\cref{prop:asymptotic-probability} \big(which gives $s^{(n)}_\Row = O\left( \frac{1}{\sqrt{n}} \right) $\big); this completes the proof.
\end{proof}

\subsection{Asymptotic independence}
\label{sec:proof-of-independent-explicit-ends}

As an intermediate step towards the proof of \cref{thm:independent-explicit} we
consider a sequence of independent random variables
\begin{equation}
\label{eq:independent-vector}
\widetilde{V}^{(\nzero)}=
\left( \widetilde{R}^{(\nzero+1)},\dots,\widetilde{R}^{(\nzero+\ell)},
\widetilde{\lambda}^{\left( \nzero+\ell\right)} \right) \inrv \mathcal{N}^{\ell}
\times \diagrams
\end{equation}
which is independent with the vectors $V^{(\nzero)}$ and
$\overline{V}^{(\nzero)}$ 
(recall the definitions in \eqref{eq:vn0} and \eqref{eq:ovV-def}), 
and such that the marginal distributions of $V^{(\nzero)}$ and
\eqref{eq:independent-vector} coincide:
\begin{align*} 
\widetilde{R}^{(\nzero+i)}  &\overset{d}{=} R^{(\nzero+i)}, 
                                \qquad \text{for all } 1\leq i \leq \ell
\\
\widetilde{\lambda}^{(\nzero+\ell)} &\overset{d}{=} 
{\lambda}^{(\nzero+\ell)}.
\end{align*}
In particular, the probability distribution of $\widetilde{V}^{(\nzero)}$
depends implicitly on $k$ which is the number of the rows of Young diagrams which we observe less~$1$.

\newcommand{\mv}{n}

\begin{lemma} 
    \label{lem:make-it-a-bit-more-independent} 
    
    For each $\Row\in\N_0$ there exists a sequence $b_\mv=o\left(
\frac{1}{\sqrt{\mv}} \right)$ with the property that for all $\nzero\geq (k+1)^2$ and $\ell\geq 1$ and
$i\in\{1,\dots,\ell\}$
    \begin{multline} 
    \label{eq:independence-step} 
    \delta\bigg( \:
    \left( \widetilde{R}^{(\nzero+1)}, \dots, \widetilde{R}^{(\nzero+i-1)},
    R^{(\nzero+i)},R^{(\nzero+i+1)},\dots,R^{(\nzero+\ell)},
    \lambda^{\left( \nzero+\ell\right)} \right)
    , \\
    \left( \widetilde{R}^{(\nzero+1)}, \dots, \widetilde{R}^{(\nzero+i-1)},
\widetilde{R}^{(\nzero+i)},R^{(\nzero+i+1)},\dots,R^{(\nzero+\ell)},
\lambda^{\left( \nzero+\ell\right)} \right)
    \bigg) \\ \leq b_{\nzero+i}.
    \end{multline}
\end{lemma}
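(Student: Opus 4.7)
The plan is to reduce the total variation distance in \eqref{eq:independence-step} to a quantity that is controlled by the conditional-Plancherel estimates of \cref{lem:single-step-plancherel-conditional,lem:single-step-plancherel-conditional-c}. As a preparatory observation, the variables $\widetilde{R}^{(\nzero+1)}, \dots, \widetilde{R}^{(\nzero+i-1)}$ are the \emph{same} on both sides of \eqref{eq:independence-step}, are mutually independent, and are independent of all remaining components; they may therefore be dropped from the comparison without changing the total variation distance. Similarly, $\widetilde{R}^{(\nzero+i)}$ is independent of the Plancherel growth process and enters only through its marginal law. Writing $W := \bigl( R^{(\nzero+i+1)}, \dots, R^{(\nzero+\ell)}, \lambda^{(\nzero+\ell)} \bigr)$ for brevity, the left-hand side of \eqref{eq:independence-step} is therefore equal to $\delta\bigl( (R^{(\nzero+i)}, W),\ (\widetilde{R}^{(\nzero+i)}, W) \bigr)$.

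Next I would apply the elementary identity
\[ \delta\bigl( (X, Y),\ (\widetilde{X}, Y) \bigr) = \E\, \delta\bigl( \PP(Y \in \cdot \mid X),\ \PP(Y \in \cdot) \bigr), \]
valid whenever $\widetilde{X}$ is an independent copy of $X$ (a direct consequence of \eqref{eq:TVD1}). The Plancherel growth process is a Markov chain on $\diagrams$, so given $\lambda^{(\nzero+i)}$ the tail $W$ is conditionally independent of $R^{(\nzero+i)}$; the data-processing inequality (alternatively, a direct mixture argument) then yields, for every $r \in \mathcal{N}$,
\[ \delta\bigl( \PP(W \in \cdot \mid R^{(\nzero+i)} = r),\ \PP(W \in \cdot) \bigr)
   \leq \delta\bigl( \PP(\lambda^{(\nzero+i)} \in \cdot \mid R^{(\nzero+i)} = r),\ \Plancherel_{\nzero+i} \bigr). \]
Combining the last two displays bounds the quantity of interest by $\E\, \delta\bigl( \PP(\lambda^{(\nzero+i)} \in \cdot \mid R^{(\nzero+i)}),\ \Plancherel_{\nzero+i} \bigr)$.

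Finally, I would split this expectation according to the value of $R^{(\nzero+i)} \in \mathcal{N}$. For each finite $r \in \{0, \dots, k\}$, \cref{prop:asymptotic-probability} gives $\PP\bigl(E_r^{(\nzero+i)}\bigr) = O\!\bigl(1/\sqrt{\nzero+i}\bigr)$, while \cref{lem:single-step-plancherel-conditional} makes the accompanying total variation distance $o(1)$; each of these $k+1$ terms therefore contributes $o\!\bigl(1/\sqrt{\nzero+i}\bigr)$. For the remaining value $r = \infty$ the probability is $O(1)$, but \cref{lem:single-step-plancherel-conditional-c} already provides the rate $o\!\bigl(1/\sqrt{\nzero+i}\bigr)$ directly. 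Adding the $k+2$ contributions produces a bound $c_{\nzero+i} = o\!\bigl(1/\sqrt{\nzero+i}\bigr)$ that is uniform in $\ell$ and $i$, as required. The main obstacle I anticipate is the Markov-property step: one has to verify carefully that, in the formal structure of the Plancherel growth process, the conditional law of $W$ given $(R^{(\nzero+i)}, \lambda^{(\nzero+i)})$ depends only on $\lambda^{(\nzero+i)}$, since the whole data-processing reduction to the Young diagram $\lambda^{(\nzero+i)}$ rests on this point.
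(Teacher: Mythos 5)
Your proposal is correct and follows essentially the same route as the paper: discard the independent common prefix, use the Markov property of the growth process to reduce the comparison to the conditional versus unconditional law of $\lambda^{(\nzero+i)}$ given $R^{(\nzero+i)}$, and then invoke \cref{prop:asymptotic-probability}, \cref{lem:single-step-plancherel-conditional} and \cref{lem:single-step-plancherel-conditional-c} for the finite values of $r$ and for $r=\infty$ respectively. Your conditional--total-variation identity together with the data-processing inequality is just a cleaner packaging of the explicit sum manipulation the paper carries out, and the resulting bound $c_{\nzero+i}$ depends only on $\nzero+i$, giving the required uniformity in $\ell$ and $i$.
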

The only difference between the two random vectors considered in
\eqref{eq:independence-step} lies in the $i$-th coordinate: in the first vector
this coordinate is equal to ${R}^{(\nzero+i)}$ while in the second to
$\widetilde{R}^{(\nzero+i)}$.
\begin{proof}
For an integer $\mv\geq 1$ we define
\begin{multline*}
    b_\mv :=   \sum_{r\in \mathcal{N}} 
    \PP\left( R^{(\mv)} = r \right)  \times\\
    \sum_{\lambda\in\diagrams_{\mv}}
    \left| 
    \PPcond{ \lambda^{(\mv)}=\lambda }{ R^{(\mv)} = r } -
    \PP\left( \lambda^{(\mv)}=\lambda \right) \right|.
\end{multline*}
For the summands corresponding to $r\in\{0,\dots,k\}$ we note that the random events $\left\{ R^{(\mv)}=r \right\}$ and $E^{(\mv)}_r$ are equal and we
apply
\cref{prop:asymptotic-probability} and
\cref{lem:single-step-plancherel-conditional}. For the summand $r=\infty$ we
apply \cref{lem:single-step-plancherel-conditional-c}. This gives the desired
asymptotics  $b_\mv=o\left(
\frac{1}{\sqrt{\mv}} \right)$. In the following we will show that this
sequence indeed fulfills \eqref{eq:independence-step}.

\medskip

An iterative application of 
 \cref{lem:TVD-independent}\ref{item:TVD-factorize}
shows that
the left-hand side of~\eqref{eq:independence-step} is equal to the total
variation distance of the suffixes
   \begin{multline*} 
\delta\bigg( \:
\left(
R^{(\nzero+i)},R^{(\nzero+i+1)},\dots,R^{(\nzero+\ell)},
\lambda^{\left( \nzero+\ell\right)} \right)
, \\
\left( 
\widetilde{R}^{(\nzero+i)},R^{(\nzero+i+1)},\dots,R^{(\nzero+\ell)},
\lambda^{\left( \nzero+\ell\right)} \right)
\bigg).
\end{multline*}    

In order to evaluate the latter we consider an arbitrary set $X\subseteq
\mathcal{N}^{\ell+1-i} \times \diagrams$. We can write
   \[ X  = \bigcup_{r\in \mathcal{N}} \{ r \} \times X_r \]
   for some family of sets $X_r\subseteq \mathcal{N}^{\ell-i} \times \diagrams
$ indexed by $r\in\mathcal{N}$. 
Since the Plancherel growth process \eqref{eq:focus} is a~Markov process \cite[Sections~2.2 and 2.4]{Kerov1999},
   \begin{multline} 
   \label{eq:trololoA}
   \PP\left(
   \left(
   R^{(\nzero+i)},R^{(\nzero+i+1)},\dots,R^{(\nzero+\ell)},
   \lambda^{\left( \nzero+\ell\right)} \right) \in X \right) = \\
\shoveleft{    \sum_{r\in \mathcal{N}} \sum_{\lambda\in\diagrams_{\nzero+i}}
        \PP\left( R^{(\nzero+i)} = r\text{ and } \lambda^{(\nzero+i)}=\lambda \right)  \times} \\ 
\shoveright{        \PPcond{  
  \left( R^{(\nzero+i+1)},\dots,R^{(\nzero+\ell)},
  \lambda^{\left( \nzero+\ell\right)} \right)  
     \in X_r }{ \lambda^{(\nzero+i)}=\lambda }   =} \\
 \shoveleft{ \sum_{r\in \mathcal{N}} \sum_{\lambda\in\diagrams_{\nzero+i}}
 \PPcond{ R^{(\nzero+i)} = r \right) \PP\left( \lambda^{(\nzero+i)}=\lambda }{ R^{(\nzero+i)} = r }  \times } \\ 
 \PPcond{
   \left( R^{(\nzero+i+1)},\dots,R^{(\nzero+\ell)},
 \lambda^{\left( \nzero+\ell\right)} \right) 
  \in X_r }{ \lambda^{(\nzero+i)}=\lambda } .        
 \end{multline} 
An analogous, but simpler calculation shows that
  \begin{multline} 
     \label{eq:trololoB}
  \PP\left(
\left(
\widetilde{R}^{(\nzero+i)},R^{(\nzero+i+1)},\dots,R^{(\nzero+\ell)},
\lambda^{\left( \nzero+\ell\right)} \right) \in X \right) = 
\\
\shoveleft{ \sum_{r\in \mathcal{N}} \sum_{\lambda\in\diagrams_{\nzero+i}}
    \PP\left( R^{(\nzero+i)} = r \right) \PP\left( \lambda^{(\nzero+i)}=\lambda  \right)  \times } \\ 
\PPcond{
\left( R^{(\nzero+i+1)},\dots,R^{(\nzero+\ell)},
\lambda^{\left( \nzero+\ell\right)} \right) 
\in X_r }{ \lambda^{(\nzero+i)}=\lambda }.        
\end{multline}

The first and the third factor on the right-hand side of \eqref{eq:trololoA}
coincide with their counterparts on the right-hand side of \eqref{eq:trololoB},
and the third factor is bounded from above by $1$. It follows that the absolute
value of the difference between \eqref{eq:trololoA} and \eqref{eq:trololoB} is
bounded from above by $b_{\nzero+i}$,
as required.
\end{proof}

\subsection{Proof of \cref{thm:independent-explicit}}
\label{proof-of-thm:independent-explicit}
\begin{proof}
An iterative application of the triangle inequality combined with
\cref{lem:make-it-a-bit-more-independent} implies that
\begin{multline}
\label{eq:distanceA}
    \delta\bigg( \:
\left( {R}^{(\nzero+1)}, \dots, R^{(\nzero+\ell)},\lambda^{\left( \nzero+\ell\right)} \right)
, \\
\left( \widetilde{R}^{(\nzero+1)}, \dots, 
      \widetilde{R}^{(\nzero+\ell)}, \widetilde{\lambda}^{\left( \nzero+\ell\right)} \right) \bigg) = 
                                                             o\left( \frac{\ell}{\sqrt{\nzero}}\right).
 \end{multline}

\medskip

On the other hand, thanks to the independence of the coordinates,
\TVDindependent gives 
\begin{multline}
\label{eq:distanceB}
  \delta\bigg( \:
\left( \widetilde{R}^{(\nzero+1)}, \dots, \widetilde{R}^{(\nzero+\ell)}, 
              \widetilde{\lambda}^{\left( \nzero+\ell\right)} \right) ,\; \\
\left( 
\overline{R}^{(\nzero+1)},\dots,\overline{R}^{(\nzero+\ell)},
\overline{\lambda}^{\left( \nzero+\ell\right)} \right)\;
\bigg) \leq 
\sum_{1\leq i \leq \ell} 
\delta\left( \widetilde{R}^{(\nzero+i)}, 
\overline{R}^{(\nzero+i)} \right).
\end{multline}

\medskip

In the remaining part of the proof we will investigate the 
individual summand which corresponds to $\mv:=\nzero+i$.
We have
 \begin{multline} 
    \label{eq:TVDu}
    \delta\left( \widetilde{R}^{(\mv)}, 
\overline{R}^{(\mv)} \right) = \frac{1}{2} 
\sum_{0\leq r \leq k} 
\left| \PP\left( \widetilde{R}^{(\mv)}= r \right) - \PP\left( \overline{R}^{(\mv)}=r \right) \right| + \\
\frac{1}{2} \left| \PP\left( \widetilde{R}^{(\mv)}= \infty \right) - \PP\left( \overline{R}^{(\mv)}=\infty \right) \right|.  
\end{multline}
The equality
\[ \PP\left( \widetilde{R}^{(\mv)}= \infty \right) = 1 - \sum_{0\leq r \leq k}  \PP\left( \widetilde{R}^{(\mv)}= r \right) \]
and the analogous equality for $\overline{R}^{(\mv)}$ imply that the right-hand side of \eqref{eq:TVDu} can be bounded as follows:
 \begin{equation}
     \label{eq:TVD-boink}
    \delta\left( \widetilde{R}^{(\mv)}, 
\overline{R}^{(\mv)} \right) \leq   
\sum_{0\leq r \leq k} 
\left| \PP\left( \widetilde{R}^{(\mv)}= r \right) - \PP\left( \overline{R}^{(\mv)}=r \right) \right|.
\end{equation}
Below we will find the asymptotics of the individual summands on the right-hand side.

For any $0 \leq r \leq k$, 
by the definitions
\[ \PP\left( \widetilde{R}^{(\mv)}= r \right) = 
\PP\left( R^{(\mv)}= r \right) =
\PP\left( E_r^{(\mv)} \right) \]
and by 
\cref{prop:asymptotic-probability} we get 
the asymptotics of the probability 
$\PP\left( \widetilde{R}^{(\mv)}= r \right)$.
The asymptotics of the probability
 $\PP\left( \overline{R}^{(\mv)}= r \right)$ is given by 
its definition~\eqref{eq:distribution-rbar}. 
It follows that the total variation distance in \eqref{eq:TVD-boink} is 
of order~$o\left( \frac{1}{\sqrt{\mv}} \right)$.

\medskip

In this way we proved that 
\begin{multline}
    \label{eq:distanceB1}
    \delta\bigg( \:
    \left( \widetilde{R}^{(\nzero+1)}, \dots, \widetilde{R}^{(\nzero+\ell)}, 
    \widetilde{\lambda}^{\left( \nzero+\ell\right)} \right) ,\; \\
    \left( 
        \overline{R}^{(\nzero+1)},\dots,\overline{R}^{(\nzero+\ell)},
        \overline{\lambda}^{\left( \nzero+\ell\right)} \right)\;
        \bigg) \leq  o\left( \frac{\ell}{\sqrt{\nzero}} \right). 
\end{multline}
The triangle inequality combined with \eqref{eq:distanceA} and \eqref{eq:distanceB1} completes the
proof.
\end{proof}

The following problem was asked by Maciej Dołęga.
\begin{question}
    Plancherel growth process may be defined in terms of Schur polynomials and the
    corresponding Pieri rule. Is it possible to apply the~ideas presented in the
    current section in the context of some other growth processes on $\diagrams$
    (such as \emph{Jack--Plancherel growth process} \cite{Kerov2000}) which are
    related to other classical families of symmetric polynomials (such as
    \emph{Jack polynomials})?
\end{question}

\subsection{What is the rate of convergence?}
\label{sec:optimal}

A more refine asymptotics for the length of the given row 
\begin{equation}\label{eq:refined}
     \E \lambda_i^{(n)} = 2 \sqrt{n} + c_i n^{\frac{1}{6}} + o\left( n^{\frac{1}{6}} \right)
\end{equation}
(see \cite[Theorem 1.2]{Baik1999} for the case of the bottom row) suggests that the
probability of the event $ E_i^{(n)}$ can be approximated by the
derivative of the right-hand side of \eqref{eq:refined}, thus
\[ 1- \sqrt{n} \ \PP\left(  E_i^{(n)} \right) = O\left( n^{- \frac{1}{3}}  \right); \]
note that the same kind of argument appeared already in \cref{rem:derivative}.
If this stronger version of \cref{prop:asymptotic-probability} indeed holds
true, then also the total variation distance in
\cref{lem:single-step-plancherel-conditional} is at most $O\left(
n^{-\frac{1}{3}} \right)$. Consequently, the total variation distance in
\cref{lem:single-step-plancherel-conditional-c} as well as the sequence $b_n$
in \cref{lem:make-it-a-bit-more-independent} can be better bounded as $O\left(
n^{-\frac{1}{2}-\frac{1}{3}} \right)=O\left( n^{-\frac{5}{6}} \right)$. This would imply that \cref{conj:optimal} indeed holds true.

\section{Proofs of the main results}

\subsection{Proof of \cref{thm:poisson-growth-Plancherel}}
\label{sec:proof-thm:poisson-growth-Plancherel}

\newcommand{\mylength}{L}

    Let $\mathbf{N}(t)=\big( N_0(t), \dots, N_k(t) \big), t\in\R$, 
		be a~collection of $k+1$ independent copies of the standard Poisson process.
Let us fix some real number $c>0$; in the following we assume that $n$ is big enough so that 
$n -c \sqrt{n}  \geq (k+1)^2$.
It follows in particular that
\[ n_t =  n + \lfloor t \sqrt{n} \rfloor \qquad \text{for }t\in [-c,c].\]
We denote 
\[ \mylength=\mylength(n)=n_c-n_{-c} 
=\lfloor c \sqrt{n} \rfloor -
\lfloor -c \sqrt{n} \rfloor.\]

\begin{lemma}
    \label{lem:TVD-TGV}
For each $c>0$ the total
	variation distance between the random vector
    \begin{equation}
    \label{eq:TVDa}
\Big( \Lambda^{(n+i)} -\Lambda^{(n)} : \lfloor -c \sqrt{n} \rfloor \leq i \leq \lfloor c\sqrt{n}\rfloor \Big) 
\inrv \left( \Z^{k+1} \right)^{ \mylength+1} 
            \end{equation}
    and the corresponding random vector
    \begin{equation}
\label{eq:TVDb}
\left( \mathbf{N}\left(  \frac{i}{\sqrt{n}} \right) : \lfloor -c \sqrt{n} \rfloor \leq i \leq \lfloor c\sqrt{n}\rfloor \right)
                     \inrv \left( \Z^{k+1} \right)^{\mylength+1}
 \end{equation}
    converges to zero, as $n\to\infty$ tends to infinity. 
\end{lemma}
\begin{proof}
We consider the bijection 
\begin{multline*}  \Z^{\mylength} \ni
\Big( a_i : \lfloor -c \sqrt{n} \rfloor \leq i \leq \lfloor c\sqrt{n}\rfloor 
\textrm{ \: with \: $a_0=0$} \Big) \mapsto \\
\Big( a_{i}-a_{i-1} : \lfloor -c \sqrt{n} \rfloor < i \leq  \lfloor c\sqrt{n}\rfloor  \Big)  \in \Z^{\mylength}.
\end{multline*}
Since an application of a bijection does not change the total variation distance,
the aforementioned total variation distance between \eqref{eq:TVDa} and
\eqref{eq:TVDb} is equal to the total variation distance $\delta(A,B)$ between the
corresponding sequences of the~increments, i.e.,
\begin{align*}
A & :=\left( \Lambda^{(n+i)} - \Lambda^{(n+i-1)} :  
\lfloor -c \sqrt{n} \rfloor < i \leq \lfloor c\sqrt{n}\rfloor \right) \\
    & \nonumber
    = \bigg( \left( \ind_{R^{(n+i)}=0},  \: \dots , \: \ind_{R^{(n+i)}=k} \right)
     :  
    \lfloor -c \sqrt{n} \rfloor < i \leq \lfloor c\sqrt{n}\rfloor
    \bigg)
\intertext{and the sequence of independent random vectors}
B &:=\left( \mathbf{N}\left(\frac{i}{\sqrt{n}}\right) - \mathbf{N}\left(\frac{i-1}{\sqrt{n}}\right) :  
\lfloor -c \sqrt{n} \rfloor < i \leq \lfloor c\sqrt{n}\rfloor \right). \\
\intertext{In the following 
we use the notations from \cref{sec:plancherel-growth-process-independent} 
with $\nzero:=\left\lfloor n - c\sqrt{n}\right\rfloor$ and $\ell:=L$. 
In particular we consider the collection of random variables~$\overline{R}^{(m)}$
with the probability distribution given by \eqref{eq:distribution-rbar} 
and \eqref{eq:distribution-rbar2} for this specific value of $\nzero$.
We define}
 \overline{A} &:= \bigg( \left( \ind_{\overline{R}^{(n+i)}=0},  \: \dots , \: \ind_{\overline{R}^{(n+i)}=k} \right)
:  
\lfloor -c \sqrt{n} \rfloor < i \leq \lfloor c\sqrt{n}\rfloor
\bigg); 
\end{align*}
our strategy will be to apply the triangle inequality
\begin{equation}\label{eq:triangle}
     \delta(A,B)\leq \delta(A,\overline{A}) + \delta(\overline{A},B). 
\end{equation}

\medskip

In order to bound the first summand on the right-hand side of \eqref{eq:triangle} we apply \cref{thm:independent-explicit} for the aforementioned values of $\nzero$ and $\ell$; it follows that
\[ \delta(A,\overline{A}) \leq 
\delta\left( V^{(\nzero)}, \overline{V}^{(\nzero)} \right)=o(1).\]

\medskip

For the second summand on the right-hand side of \eqref{eq:triangle} we apply \TVDindependent 
\begin{multline*}
\delta(\overline{A},B)=
\sum_{\lfloor -c \sqrt{n} \rfloor < i \leq \lfloor c\sqrt{n}\rfloor}
\delta \Bigg( \left( \ind_{\overline{R}^{(n+i)} = 0}, \dots, \ind_{\overline{R}^{(n+i)} = k} \right) , \\  
\underbrace{\operatorname{Pois}\left(\frac{1}{\sqrt{n}} \right)
\times \cdots \times \operatorname{Pois}\left(\frac{1}{\sqrt{n}} 
\right)}_{\text{$k+1$ factors}} \Bigg)  
= O \left( \frac{1}{\sqrt{n}} \right),
\end{multline*}
where the last bound follows from a direct calculation of the total variation distance of specific probability distributions on $\N_0^{k+1}$.
\end{proof}

\begin{proof}[Proof of \cref{thm:poisson-growth-Plancherel}]
For given $t_1,\dots,t_\ell\in \R$ we select arbitrary $c> \max( |t_1|,\dots, |t_\ell|)$.
Let $n$ be big enough so that $n-c \sqrt{n} \geq (k+1)^2$. We apply
\cref{lem:TVD-TGV}; from the two random vectors which appear in this lemma we
select the coordinates which correspond to $i\in \left\{ \lfloor t_1
\sqrt{n} \rfloor , \dots, \lfloor t_\ell \sqrt{n} \rfloor \right\}$. It follows
that the total variation distance between the law of the finite-dimensional
marginal
\[ \left( \Lambda^{(n_{t_1})}-\Lambda^{(n)},  \: \dots , \:
\Lambda^{(n_{t_\ell})}-\Lambda^{(n)} \right) \] 
and the law of the appropriate marginal of $\mathbf{N}$, that is 
\begin{equation}
    \label{eq:Poisson-tilt-marginal}
     \left( \mathbf{N}\left( \frac{\lfloor t_1 \sqrt{n} \rfloor }{\sqrt{n}} \right), \; \dots , \;\mathbf{N}\left( \frac{\lfloor t_\ell \sqrt{n} \rfloor }{\sqrt{n}} \right) \right) 
\end{equation}
converges to zero as $n\to\infty$.

On the other hand, by the maximal coupling lemma
\cite[Section 8.3, Eq.~(8.19)]{Thorisson2000} 
(the definition of the total variance distance 
therein differs from ours by the factor~$2$), 
the total variation distance between \eqref{eq:Poisson-tilt-marginal}
and
\[ \left( \mathbf{N}\left( t_1 \right), \; \dots , \;\mathbf{N}\left( t_\ell \right) \right) \]
is bounded from above by  
\[ \PP\left\{ \mathbf{N}\left( \frac{\lfloor t_i \sqrt{n} \rfloor }{\sqrt{n}} \right)   
\neq
\mathbf{N}\left( t_i \right)  \text{ for some } i\in\{1,\dots,\ell\} \right\} = O\left( \frac{1}{\sqrt{n}} \right).
\]

An application of the triangle inequality for the total variation distance completes the proof.
\end{proof}

\begin{problem}
    \label{problem:32}
  Find the~precise rate of convergence in
\cref{lem:single-step-plancherel-conditional} and
\cref{thm:independent-explicit}. This convergence probably cannot be too fast
because this would imply that an analogue of
\cref{thm:poisson-growth-Plancherel} holds true also in the scaling when in
\eqref{eq:growth-Poisson} we study $t\gg 1$, and the latter would potentially
contradict the non-Gaussianity results for the lengths of the rows of
Plancherel-distributed Young diagrams
\cite{Baik1999,Baik2000,Borodin2000,Johansson2001}.
\end{problem}

\newcommand{\SetA}{A}
\newcommand{\SetB}{\overline{A}}
\newcommand{\SetC}{B}

\subsection{Proof of \cref{coro:localQ}} 
\label{sec:proof-localQ}

\begin{proof}
Let $c>0$ be arbitrary. We consider $n \in\N_0$ which is big enough so that $n- c \sqrt{n}-1 \geq (k+1)^2$. 
We denote
\[ I_n=\left\{ \left\lceil n - c\sqrt{n}\right\rceil,\dots, \left\lfloor
n+ c\sqrt{n} \right\rfloor \right\}.\]
Our strategy is to apply \cref{thm:independent-explicit} for $\nzero:=\left\lceil n -
c\sqrt{n}\right\rceil-1$ and $\ell:=|I_n|$; in particular in the following we
will use the collection of random variables $\overline{R}^{(i)}$ over $i\in I_n$
with the probability distribution given by \eqref{eq:distribution-rbar} and 
\eqref{eq:distribution-rbar2} for this specific value of $\nzero$.

We consider the following three collections of $k+1$ random subsets of~$I_n$:
\begin{itemize}
    \item the sequence  $\SetA=\left(\SetA_0,\dots,\SetA_k\right)$ with
    \begin{align*} \SetA_y  & =  \left\{ Q_{x,y} : x\in \N_0 \right\} \cap I_n \\ & = \left\{ i\in I_n : {R}^{(i)}=y \right\}  
        \end{align*}
    obtained by selecting the entries of the bottom $k+1$ rows of the recording
tableau which belong to the specified interval, 

 \item the sequence $\SetB=\left( \SetB_0,\dots, \SetB_k\right)$ with 
\[ \overline{A}_y= \left\{ i\in I_n : \overline{R}^{(i)}=y \right\}, \]

 \item the sequence $\SetC= \left(\SetC_0,\dots,\SetC_k\right)$ 
obtained by independent sampling, i.e., such that the family of random
events $\left\{ i \in \SetC_y \right\}$  indexed by $i\in I$ and $y\in\{0,\dots,k\}$
is a family of independent events, each having equal probability $\frac{1}{\sqrt{n}}$. 
\end{itemize}

\cref{thm:independent-explicit}  implies that the total variation distance
$\delta\big( \SetA, \SetB \big)$ converges to zero, as $n\to\infty$. 

The information about the sequence $\SetB$ can be alternatively encoded by the
sequence of independent random variables $ \left( v_i : i\in I_n \right)$ given by
\begin{align*} 
v_i & = \left( \ind_{i \in \SetB_0}, \dots, \ind_{i \in \SetB_k} \right) =  
    \left( \ind_{\overline{R}^{(i)}=0}, \dots, \ind_{\overline{R}^{(i)}=k} \right)  \qquad \text{for }i \in I_n \\
\intertext{Analogously the information about $\SetC$ can be encoded by 
the sequence of independent random variables $ \left( w_i : i\in I_n \right)$,
where} 
 w_i & = \left( \ind_{i \in \SetC_0}, \dots, \ind_{i \in \SetC_k} \right) \qquad \text{for }i \in I_n . 
\end{align*}
By \TVDindependent it follows that the total variation distance for the vectors
\[ \delta(\SetB,\SetC)=\delta(v,w) \leq \sum_{i\in I_n} \delta( v_i, w_i) \]
is bounded by the sum of the coordinatewise total variation distances.
The~asymptotics of the individual summand 
\[ \delta( v_i, w_i) = o\left( \frac{1}{\sqrt{n}} \right) \]
is a consequence of a direct calculation  based on the explicit form of the two probability distributions involved here.
In this way we proved that the total variation distance $\delta(\SetB,\SetC)$
converges to $0$ as $n\to\infty$.

\medskip

We apply a shift and a scaling to the random sets which form the collection~$\SetA$ and the collection $\SetC$;
it follows that the total variation distance between 
\begin{itemize}
    \item
    the collection of random subsets of $\R$
\[
\left( \left\{ \frac{Q_{x,y}-n}{\sqrt{n}} \ : \ x\in \N_0 \right\}  \cap [-c,c] : \  \  y\in\{0,\dots,k\} \right) \]
obtained by truncating \eqref{eq:setsQ}, and 

\item 
the collection of random subsets of $\R$
\begin{equation}
    \label{eq:prepoisson}
\left( \left\{ \frac{j-n}{\sqrt{n}} \ : \  j\in \SetC_y \right\}  : \  \  y\in\{0,\dots,k\} \right) 
\end{equation}
\end{itemize}
converges to zero as $n\to\infty$.

Each random set from the collection \eqref{eq:prepoisson} converges in distribution to the Poisson point process 
on the interval $[-c,c]$, see \cite[Proposition 11.3.I]{Daley-Vere-Jones2},
which concludes the proof.
\end{proof}

\subsection{Proof of \cref{coro:localP}}
\label{sec:proof-localP}

We start with an auxiliary result.

\newcommand{\cardinalityPois}{\mathfrak{n}}
\newcommand{\cardinalityBin}{\mathfrak{m}}
\newcommand{\measure}{\mathcal{M}}

\newcommand{\Const}{C}

\begin{lemma}
    \label{lem:TVD-Poisson}
    For real numbers $p,\lambda\geq 0$ and for integers $k\geq 0$ and $n$ such that $p (k+1)\leq 1$ let
$\xi_1,\dots,\xi_n$ be independent, identically distributed random variables with
the uniform distribution $U(0,1)$ and let $\overline{R}^{(1)},\dots,\overline{R}^{(n)}$ be independent, identically distributed random variables with the distribution
\begin{equation}
    \label{eq:funny-probability-dist}
\left\{    \begin{aligned}
    \PP\left\{ \overline{R}^{(i)}= r \right\} &= p 
    & \text{for }r\in\{0,\dots,k\},
    \\
    \PP\left\{ \overline{R}^{(i)}= \infty \right\} &= 1- (k+1)p,
\end{aligned}
\right.
\end{equation}
cf.~\eqref{eq:distribution-rbar} and \eqref{eq:distribution-rbar2} for an analogous distribution in a similar context.

Then for any real numbers $a,b$ such that $0\leq a\leq b\leq 1$ 
the total variation distance between:
    \begin{enumerate}[label=(\alph*)]
        \item \label{TVD:1}
        the collection of $k+1$ random sets
\begin{equation}
    \label{eq:my-collection}
             \left( [a,b]\cap \left\{ \xi_i : \overline{R}^{(i)}=y \right\}     \; : \; y\in\{0,\dots,k\} \right), 
\end{equation}
        and
        \item \label{TVD:2}
        the collection of $k+1$ independent Poisson point processes $N_0,\dots,N_k$ on the interval $[a,b]$ with the intensity $\lambda$,
    \end{enumerate}
    is bounded from above by 
    \[  (k+1)^2 n p^2 l^2  +  (k+1)\cdot \left| \lambda l - n p l \right|,\]
    where $l=b-a$ is the length of the interval. 
\end{lemma}
\begin{proof}
For each $i\in\{0,\dots,k\}$ the corresponding Poisson point process $N_i$ can be generated by the following two-step
procedure. Firstly, we sample the~number of points $\cardinalityPois_i$; it is a random
variable with the Poisson distribution with the parameter $\lambda l$, where $l:=b-a$ is the length of the interval. Secondly,
we take $\cardinalityPois_i$ independent random elements of the unit interval $[a,b]$ with
the uniform distribution. The random variables $\cardinalityPois_0,\dots,\cardinalityPois_k$ 
which
correspond to independent Poisson processes are independent.

A similar construction can be performed for the collection
\eqref{eq:my-collection} of random sets: we first sample the vector
$(\cardinalityBin_0,\dots,\cardinalityBin_k)$ of the cardinalities of the sets from \eqref{eq:my-collection}
and then for each index $i\in\{0,\dots,k\}$ we sample $\cardinalityBin_i$ elements of the
interval $[a,b]$. In this case, however, the random variables $\cardinalityBin_0,\dots,\cardinalityBin_k$ are
not independent.

From the above discussion it follows 
that the total variation distance considered
in the statement of this lemma between \ref{TVD:1} and \ref{TVD:2} 
 is equal to the total variation
distance between the random vectors $\cardinalityBin=(\cardinalityBin_0,\dots,\cardinalityBin_k)$ and $\cardinalityPois=(\cardinalityPois_0,\dots,\cardinalityPois_k)$. 
In the following we will bound the latter distance.

The distribution of the random vector 
\[ \cardinalityBin =
\sum_{1\leq j \leq n}  \left( \ind_{\overline{R}^{(j)}=0},  \: \dots , \: \ind_{\overline{R}^{(j)}=k} \right)\] 
is the $n$-fold additive convolution of the discrete probability measure
$\measure$ 
on~$\mathbb{Z}^{k+1}$ which to each basis vector $e_i = [0,\dots,0, 1, 0,\dots, 0] \in \mathbb{Z}^{k+1}$ 
associates the probability $pl$ 
and to the zero vector associates the remaining probability $1-(k+1) pl$.

On the other hand, the distribution of the random vector $\cardinalityPois$
can be alternatively seen as the $n$-fold additive convolution of the product measure
$\mathcal{N}=\operatorname{Pois}\left( \frac{\lambda l}{n} \right) \times \cdots  \times
\operatorname{Pois}\left( \frac{\lambda l}{n} \right)$ on $\mathbb{Z}^{k+1}$. We also consider an auxiliary product measure 
$\mathcal{N}'=\operatorname{Pois}\left( p l\right) \times \cdots  \times
\operatorname{Pois}\left( pl \right)$
on $\mathbb{Z}^{k+1}$.

By \cref{lem:TVD-independent}\ref{item:TVD-convolution} and the triangle inequality
it follows that
\begin{equation}
    \label{eq:m-and-n}
     \delta( \cardinalityBin, \cardinalityPois ) 
  \leq  
 n
\ \delta\left( \measure, \mathcal{N}
\right) \leq 
 n\ \delta\left( \measure, \mathcal{N}'\right) +
 n\ \delta\left(  \mathcal{N}',  \mathcal{N} \right). 
\end{equation}
 For the first summand on the right-hand side, by a direct
 calculation of the positive part of the difference of the two measures and its $\ell^1$ norm, we have
\[  n\ \delta\left( \measure, \mathcal{N}'\right)
= n (k+1) \left[ pl - pl  e^{-pl(k+1)} \right]  \leq 
n (k+1)^2 p^2 l^2  ,
\]
where the inequality follows from an elementary bound on the exponential
function. For the second summand on the right-hand side of \eqref{eq:m-and-n} we apply
\cref{lem:TVD-independent}\ref{item:TVD-convolution} in order to bound the total
variation distance between two Poisson distributions; it follows that
\[   n\ \delta\left(  \mathcal{N}',  \mathcal{N} \right) \leq (k+1)\cdot \left| \lambda l - n p l\right|
\]
which completes the proof.
\end{proof}

\begin{proof}[Proof of \cref{coro:localP}]
    We start with the case when $0<w<1$. We will show a stronger result that for
each $A>0$ the total variation distance between:
    \begin{enumerate}[label=(\roman*)]
        \item \label{item:poisson-p1}
        the collection of $k+1$ sets
\begin{equation}
    \label{eq:truncated}
              \left( \myPset_y \cap [-A, A] \; : \; y\in\{0,\dots,k\} \right), 
\end{equation}
         (cf.~\eqref{eq:myPset}), and 
         \item \label{item:poisson-pp} the
collection of $k+1$ independent Poisson point processes on the interval
$[-A,A]$ with the intensity $\frac{1}{\sqrt{w}}$
		\end{enumerate}
converges to zero, as $n\to\infty$.

\medskip

    As the first step, let us fix $\epsilon>0$. Let $Z_1$ be the number of the
entries of the sequence $\xi_1,\dots,\xi_n$ which are weakly smaller than
$w-\frac{A}{\sqrt{n}}$ and let $Z_2$ be the number of the entries of this
sequence which are strictly smaller than $w+\frac{A}{\sqrt{n}}$;
clearly the probability distribution of $Z_1$ and $Z_2$ is a binomial distribution.
    By Bienaymé--Chebyshev inequality it follows that the constant
\[ B:= A+\frac{1}{\sqrt{\epsilon}} \]
has the property that for each positive integer $n$ 
\newcommand{\nmin}{n_{\operatorname{min}}}
\newcommand{\nmax}{n_{\operatorname{max}}}   
\begin{equation}
    \label{eq:GoodEvent}   
       \mathbb{P} \left( Z_1 <  \nmin \right) \leq \epsilon, \qquad 
              \mathbb{P} \left( Z_2 \geq \nmax \right) \leq \epsilon
    \end{equation}
with
\[ \nmin:=\left\lfloor nw -B \sqrt{n} \right\rfloor,\qquad  
\nmax:=\left\lceil nw +B \sqrt{n} \right\rceil.\]
In the following we assume that $n$ is big enough so that 
\[
1\leq \nmin \leq \nmax \leq n.
\]

\medskip

Without loss of generality we may assume that the entries of the sequence
$\xi_1,\dots,\xi_n$ are not repeated. It follows that this sequence can be
encoded by two pieces of information:
\begin{itemize}
    \item the sequence of order statistics $0<\xi_{(1)}<\cdots<\xi_{(n)}<1$, and
\item the permutation $\pi=(\pi_1,\dots,\pi_n)$ which encodes the order of the
entries, i.e.~$\pi_i<\pi_j$ if and only if $\xi_i<\xi_j$; 
\end{itemize}
these two pieces of information are clearly independent and the permutation $\pi$
is a uniformly random element of the symmetric group.

Since RSK algorithm is sensitive only to the relative order of the entries and
not to their exact values, the insertion tableaux $P(\xi_1,\dots,\xi_n)$ can be
obtained from the insertion tableau $P(\pi_1,\dots,\pi_n)$ by replacing each
entry by the corresponding order statistic. It follows that the entries of a
given row $y \in \N_0$ of the insertion tableau can be alternatively described as follows:
\begin{multline*} 
\left\{  P^{(n)}_{x,y} \ : \ 0\leq x < \lambda^{(n)}_y \right\} = \\
   = \left\{ \xi_{(i)} :  \text{$i$ is in the row $y$ of the tableau $P(\pi_1,\dots,\pi_n)$} \right\} 
.    \end{multline*}

It follows in particular that the intersection $\myPset_y \cap [-A, A]$ is
defined in terms of (a certain subset of) the set of these order statistics $\xi_{(i)}$ which belong to the interval 
\[ I:= [a,b]\]
with
\[ a:=  w - \frac{A}{\sqrt{n}}, \qquad \qquad b:= w +  \frac{A}{\sqrt{n}} . \]
If neither of the two random events appearing in
\eqref{eq:GoodEvent} holds true then this set of order statistics fulfills
\[ 
I \cap \left\{ \xi_{(1)}, \dots, \xi_{(n)} \right\} 
\subseteq \left\{ \xi_{(\nmin)}, \dots, \xi_{(\nmax)} \right\}; 
\]
under this condition it follows that in order to find the number of the row of
$P^{(n)}$ which contains a given order statistic $\xi_{(i)}\in I$ it is enough to
know the number of the row of the tableau $P(\pi_1,\dots,\pi_n)$ which contains a
given number $j$, over all choices of $j\in\{ \nmin,\dots,\nmax\}$.
Here and in the following by \emph{the number of the row} 
we will understand the element of
the fixed finite set $\mathcal{N}=\{0,\dots,k,\infty\}$ with the convention that
the element $\infty$ corresponds to all rows above the bottom $k+1$ rows.

The insertion tableau $P(\pi)=Q\left( \pi^{-1}\right)$ is equal to the recording
tableau of the inverse permutation; as a consequence the probability distribution
of $P(\pi)$ is given by the Plancherel measure, can be interpreted as (a part of)
the Plancherel growth process and thus \cref{thm:independent-explicit} is
applicable to this tableau. It follows that the probability distribution of the
vector formed by the so understood \emph{numbers of the rows} of the boxes
$\nmin,\dots,\nmax$ can be approximated (up to an error $o(1)$ with respect to
the total variation distance) by a sequence of independent random variables with
the probability distribution given by \eqref{eq:funny-probability-dist} for
$p= \frac{1}{\sqrt{\nmin}}$. Here and in the following we assume that $n$ is big enough so that $(k+1) p \leq 1$.

The above two paragraphs show that the total variation distance between the
collection 
\begin{equation}
    \label{eq:truncated-2}
    \left( [a,b]\cap \left\{ P_{x,y}^{(n)} : x\geq 0 \right\}   : y\in\{0,\dots,k\} \right), 
\end{equation}
of truncated entries of the bottom rows
and the collection \eqref{eq:my-collection} is
bounded from above by $2\epsilon+o(1)$. On the other hand, \cref{lem:TVD-Poisson}
applied to $\lambda=\sqrt{\frac{n}{w}}$
shows that the total variation distance between \eqref{eq:my-collection} and the
collection of $k+1$ independent Poisson point processes \ref{TVD:2} with the intensity $\lambda$
on the interval $I$ converges to zero. We combine these two bounds by the triangle inequality;
as a result the total variation distance between \eqref{eq:truncated-2} and \ref{TVD:2} is bounded from above by $2\epsilon+o(1)$.

We consider the affine transformation $x\mapsto \sqrt{n}\ (x-w)$ which maps the
interval $[a,b]$ to $[-A,A]$. This affine transformation also maps the random
collection \eqref{eq:truncated-2} to \eqref{eq:truncated} from \ref{item:poisson-p1};
it also maps the collection of
Poisson processes  \ref{TVD:2} from \cref{lem:TVD-Poisson} with the intensity
$\lambda$ to the collection \ref{item:poisson-pp} of Poisson processes with the
intensity $\lambda \frac{1}{\sqrt{n}}=\frac{1}{\sqrt{w}}$. The application of this
affine transformation preserves the total variation distance between the random
variables, so the inequality from the previous paragraph completes the proof.

\bigskip

In the case when $w=1$ the above proof can be easily adjusted by changing the definitions of
$\nmax:=n$ and $b:=1$.
\end{proof}

\section{Acknowledgments}

Research supported by Narodowe Centrum Nauki, grant number \linebreak 2017/26/A/ST1/00189.
Mikołaj Marciniak was additionally supported by 
Narodowe Centrum Badań i Rozwoju, grant number POWR.03.05.00-00-Z302/17-00.

We thank Iskander Azangulov, Maciej Dołęga, Vadim Gorin, Piet Groeneboom, Adam
Jakubowski, Grigory Ovechkin, Timo Sepp\"{a}l\"{a}inen, and Anatoly Vershik for
discussions and bibliographic suggestions.

\biblio

\end{document}